\documentclass[11pt,reqno]{amsart}
\usepackage{amsmath,amsthm,amssymb}

\newtheorem{theorem}{Theorem}[section]
\newtheorem{lemma}[theorem]{Lemma}
\newtheorem{proposition}[theorem]{Proposition}
\newtheorem{corollary}[theorem]{Corollary}

\newtheorem{problem}[theorem]{Problem}

\theoremstyle{definition}

\newtheorem{definition}[theorem]{Definition}

\newtheorem{remark}[theorem]{Remark}

\newtheorem{example}[theorem]{Example}

\numberwithin{equation}{section}

\begin{document}

\title{Twists, Codazzi Tensors, and the $6$-sphere}

\author{David N. Pham}
\address{Department of Mathematics $\&$ Computer Science, QCC CUNY, Bayside, NY 11364}
\curraddr{}
\email{dnpham@qcc.cuny.edu}
\thanks{This work was supported by PSC-CUNY Award $\#$68045-00 56.}

\subjclass[2020]{32Q60, 53C15, 53C55}
\keywords{almost Hermitian manifolds, twists, Codazzi tensors, nearly K\"{a}hler manifolds}

\dedicatory{}

\begin{abstract}
Let $(M,g,J,\omega)$ be an almost Hermitian manifold.  Given an automorphism $\psi\in \mathrm{Aut}(TM)$, the existing structure can be twisted to obtain a new almost Hermitian manifold $(M,g^\psi,J^\psi,\omega^\psi)$.  In the current paper, we study these $\psi$-twisted almost Hermitian structures with particular emphasis on questions regarding the integrability of $J^\psi$ and the Riemannian geometry of $g^\psi$.  By studying the latter, we identity a certain class of $\mathrm{Aut}(TM)$ with nice transformation properties.  We call these automorphisms $g$-\textit{Codazzi maps} because of their close relationship with Codazzi tensors.  The aforementioned results are ultimately applied to the standard nearly K\"{a}hler structure on the $6$-sphere where we prove a nonintegrability result for the class of $g$-Codazzi maps.
\end{abstract}


\maketitle

\section{Introduction}
The study of deformations of almost Hermitian structures is a very active area of research in complex geometry, especially as it pertains to questions of stability of certain special geometries (e.g., nearly K\"{a}hler, Calabi-Yau, SKT, etc.)  \cite{MNS2008, Foscolo2017,Bogomolov1978,Tian1987,Todorov1989,FT2009,ST2010}.   In the current paper, we consider ``twists" of an almost Hermitian manifold $(M,g,J,\omega)$ by elements of $\mathrm{Aut}(TM)$ as opposed to genuine deformations.  Our focus is then on questions concerning the integrability of the twisted almost complex structure as well as the Riemannian geometry of the twisted metric.  While these twists are far simpler and more limited than deformations, we believe that they still carry with them interesting phenomena and questions worthy of study.  We will motivate this view with an example.  First, however, we make a formal definition:
\begin{definition}
\label{defPsiTwist}
Let $(M,g,J,\omega)$ be an almost Hermitian manifold and let $\psi\in \mbox{Aut}(TM)$.  The $\psi$-twist of $(M,g,J,\omega)$ is the almost Hermitian manifold
$$
(M,g^\psi,J^\psi,\omega^\psi)
$$ 
where
$$
g^\psi:=g(\psi^{-1}\cdot,\psi^{-1}\cdot),\hspace*{0.1in} J^\psi:=\psi\circ J\circ \psi^{-1},\hspace*{0.1in}\omega^\psi:=\omega(\psi^{-1}\cdot,\psi^{-1}\cdot).
$$
\end{definition}
\noindent From Definition \ref{defPsiTwist}, it is very easy to see that the twisted structure is still an almost Hermitian manifold.  

For a motivating example, consider the classic left-invariant strictly nearly K\"{a}hler structure on $S^3\times S^3=SU(2)\times SU(2)$ \cite{BVW2015, Gray1972, Butruille2006}.  Since the structure is left-invariant, everything is completely defined at the Lie algebra level.  The Lie algebra $\mbox{Lie}(SU(2)\times SU(2))=\mathfrak{su}(2)\oplus \mathfrak{su}(2)$ has basis 
$$
e_1,e_2,e_3,f_1,f_2,f_3
$$
with bracket relations
\begin{align*}
&[e_1,e_2]=e_3,~[e_2,e_3]=e_1,~[e_3,e_1]=e_2,\\
&[f_1,f_2]=f_3,~[f_2,f_3]=f_1,~[f_3,f_1]=f_2,\\
&[e_i,f_j]=0.
\end{align*}
For convenience, let $\mathfrak{g}:=\mathfrak{su}(2)\oplus \mathfrak{su}(2)$.  The almost complex structure $J:\mathfrak{g}\rightarrow \mathfrak{g}$ is given as follows:
$$
Je_i=-\frac{1}{\sqrt{3}}(e_i+2f_i),\hspace*{0.2in} Jf_i=\frac{1}{\sqrt{3}}(2e_i+f_i).
$$
The metric $g$ is given by
$$
g(e_i,e_j)=g(f_i,f_j)=\frac{4}{3}\delta_{ij},\hspace*{0.1in}g(e_i,f_j)=-\frac{2}{3}\delta_{ij}.
$$
The left-invariance of $g$ implies that its Levi-Civita connection $\nabla^g$ is also left-invariant.  Write $\nabla^g_{e_i}e_j=\sum_k\Gamma^k_{ij}e_k$.  Using the Kozul formula one finds that the nonzero components of $\nabla^g$ are given by
$$
\Gamma_{12}^3=\frac{1}{2},~\Gamma_{13}^2=-\frac{1}{2},~\Gamma_{15}^3=-\frac{1}{6},~\Gamma_{15}^6=\frac{1}{6},~\Gamma_{16}^2=\frac{1}{6},~\Gamma_{16}^5=-\frac{1}{6},
$$
$$
\Gamma_{21}^3=-\frac{1}{2},~\Gamma_{23}^1=\frac{1}{2},~\Gamma_{24}^3=\frac{1}{6},~\Gamma_{24}^6=-\frac{1}{6},~\Gamma_{26}^1=-\frac{1}{6},~\Gamma_{26}^4=\frac{1}{6},
$$
$$
\Gamma_{31}^2=\frac{1}{2},~\Gamma_{32}^1=-\frac{1}{2},~\Gamma_{34}^2=-\frac{1}{6},~\Gamma_{34}^5=\frac{1}{6},~\Gamma_{35}^1=\frac{1}{6},~\Gamma_{35}^4=-\frac{1}{6},
$$
$$
\Gamma_{42}^3=\frac{1}{6},~\Gamma_{42}^6=-\frac{1}{6},~\Gamma_{43}^2=-\frac{1}{6},~\Gamma_{43}^5=\frac{1}{6},~\Gamma_{45}^6=\frac{1}{2},~\Gamma_{46}^5=-\frac{1}{2},
$$
$$
\Gamma_{51}^3=-\frac{1}{6},~\Gamma_{51}^6=\frac{1}{6},~\Gamma_{53}^1=\frac{1}{6},~\Gamma_{53}^4=-\frac{1}{6},~\Gamma_{54}^6=-\frac{1}{2},~\Gamma_{56}^4=\frac{1}{2},
$$
$$
\Gamma_{61}^2=\frac{1}{6},~\Gamma_{61}^5=-\frac{1}{6},~\Gamma_{62}^1=-\frac{1}{6},~\Gamma_{62}^4=\frac{1}{6},~\Gamma_{64}^5=\frac{1}{2},~\Gamma_{65}^4=-\frac{1}{2}.
$$
The diligent reader can verify that $(g,J)$ is an almost Hermitian structure which satisfies the nearly K\"{a}hler condition
$$
(\nabla^g_{e_i}J)e_j=-(\nabla^g_{e_j}J)e_i,\hspace*{0.1in}\forall~i,j.
$$
Now define a (left-invariant) bundle automorphism $\psi$ of $T(SU(2)\times SU(2))$ by
\begin{align*}
\psi e_1=e_1,\hspace*{0.1in}~\psi e_2=f_1,&\hspace*{0.1in}~\psi e_3 = e_3,\\
\psi f_1=-\frac{1}{2}e_1-\frac{\sqrt{3}}{2}e_2,\hspace*{0.1in}\psi f_2=-\frac{1}{2} &f_1-\frac{\sqrt{3}}{2}f_2,\hspace*{0.1in}\psi f_3=-\frac{1}{2}e_3-\frac{\sqrt{3}}{2}f_3.
\end{align*}
By direct calculation, one verifies that the Nijenhuis tensor $N_{J^\psi}$ of $J^\psi$ is now precisely zero, where we recall that 
$$
N_{J^\psi}(X,Y):=J^\psi[J^\psi X,Y]+J^\psi [X,J^\psi Y]+[X,Y]-[J^\psi X,J^\psi Y].
$$
By the Newlander-Nirenberg Theorem, $J^\psi$ is an integrable almost complex structure.  In addition, the twist has also transformed the metric in a surprising way.  One also verifies that the fundamental form $\omega^\psi$ is pluriclosed, that is, $\partial\overline{\partial}\omega^\psi=0$, where  $\partial, \bar{\partial}$ are the Dolbeault operators with respect to $J^\psi$. Note that the pluriclosed condition is equivalent to the condition $dc=0$, where $c$ is the 3-form defined by
$$
c:=(d\omega^\psi)(J^\psi\cdot,J^\psi\cdot,J^\psi\cdot).
$$ 
The condition $dc=0$ is easier to verify in the current example.  Hence, the $\psi$-twisted Hermitian structure $(g^\psi,J^\psi,\omega^\psi)$ is an SKT manifold.  The reader with little or no familiarity with SKT geometry is referred to \cite{FinoGrantcharov2024} and the references therein for a nice overview of the subject.

In the above motivating example, a non-integrable almost complex structure has been $\psi$-twisted into an integrable one (in addition to producing an SKT metric in the process).  Before continuing, let us take a moment to consider the differences between an automorphism $\psi\in \mathrm{Aut}(TM)$ and a diffeomorphism $F\in \mathrm{Diff}(M)$.  In both cases, one has induced maps
$$
\psi: \mathfrak{X}(M)\rightarrow \mathfrak{X}(M),\hspace*{0.1in} dF: \mathfrak{X}(M)\rightarrow \mathfrak{X}(M).
$$
However, it would be a mistake to regard $\psi$ and $dF$ as similar objects.  Indeed, the induced map on vector fields given by $\psi$ is $C^\infty(M)$-linear while the induced map given by $dF$ is not $C^\infty(M)$-linear.  This is the reason why $\psi$ is incapable of preserving the Lie bracket of vector fields (unlike $dF$).  Indeed, let $X,Y\in \mathfrak{X}(M)$ and let $f\in C^\infty(M)$ and suppose that $\psi$ preserves Lie brackets.  Then 
$$
[\psi(X),\psi(fY)]=(\psi(X)f)\psi(Y)+f[\psi(X),\psi(Y)]
$$
and 
$$
\psi([X,fY])=(Xf)\psi(Y)+f\psi([X,Y]).
$$
Under the assumption that $\psi$ preserves Lie brackets, it follows that 
$$
\psi(X)f=Xf
$$
for all $f\in C^\infty(M)$ and $X\in \mathfrak{X}(M)$. From this, we see that $\psi$ fails to preserve the Lie bracket of vector fields unless $\psi = \mathrm{id}_{TM}$.  

Using the differential $dF$, one can certainly conjugate an existing almost complex structure $J$ on $M$ to obtain a new almost complex structure 
$$
I:=dF\circ J\circ dF^{-1}.
$$ 
Note that while $dF$ and $dF^{-1}$ are not $C^\infty(M)$-linear, the above expression is $C^\infty(M)$-linear and defines a genuine almost complex structure on $M$.  By direct calculation, the Nijenhuis tensor of $I$ transforms as
$$
dF^{-1}\left(N_I(X,Y)\right)=N_J(dF^{-1}(X), dF^{-1}(Y)).
$$
Hence, $N_I=0$ if and only if $N_J=0$.  In the nearly K\"{a}hler example given above, the fact that $\psi$ \textit{fails} to preserve Lie brackets is precisely the reason why the \textit{possiblity} exists to transform a non-integrable almost complex structure into an integrable one.

With this motivation, the current paper will study twists of an almost Hermitian manifold $M$ by elements of $\mathrm{Aut}(TM)$, especially as it pertains to questions related to the integrability of the almost complex structure.  Taking inspiration from the above example on $S^3\times S^3$ and the work of Blanchard \cite{Blanchard1953}, LeBrun \cite{LeBrun1987}, Bor$\backslash$Hernandez-Lamoneda \cite{BorHernandezLamoneda1999}, and Kruglikov \cite{Kruglikov2017} on the 6-sphere, we consider the following problem: 
\begin{problem}
\label{conjPsiTwist}
Let $J$ be the standard almost complex structure on $S^6$.  
\begin{itemize}
\item[(i)] Determine all $\psi\in \mathrm{Aut}(TS^6)$ for which $J^\psi$ is nonintegrable.
\item[(ii)] Determine all $\psi \in \mathrm{Aut}(TS^6)$ for which $J^\psi$ is integrable.
\end{itemize}
\end{problem}
\noindent The following characterization\footnote{The author wishes to thank Xavier Butler for kindly reminding the author of this important fact.} is well known to complex geometers.  However, the author is unable to find an explicit reference for it.
\begin{proposition}
\label{propACSonS6}
Let $J$ be the standard almost complex structure on $S^6$.  Then every almost complex structure on $S^6$ is of the form $\pm J^\psi$ for $\psi\in \mbox{Aut}(TS^6)$.
\end{proposition}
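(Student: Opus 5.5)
The plan is to reduce the statement to a question about the topology of the space of complex structures on $\mathbb{R}^6$, and then use a known fact about almost complex structures on $S^6$. Let $I$ be any almost complex structure on $S^6$. At each point $p$, both $(T_pS^6,J_p)$ and $(T_pS^6,I_p)$ are $3$-dimensional complex vector spaces, so there is a linear isomorphism $\psi_p$ with $I_p=\psi_p J_p\psi_p^{-1}$. The task is therefore to choose such $\psi_p$ smoothly and globally, possibly after replacing $I$ by $-I$. Equivalently, we need an isomorphism of complex vector bundles $(TS^6,J)\cong (TS^6,I)$ or $(TS^6,J)\cong(TS^6,-I)$. Once we have such a complex-linear bundle isomorphism $\psi:(TS^6,J)\to(TS^6,\pm I)$, we get $\psi J=\pm I\psi$. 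Hence $\pm I=\psi J\psi^{-1}=J^\psi$, which gives $I=\pm J^\psi$ with $\psi\in\mathrm{Aut}(TS^6)$.

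The key step is to classify complex vector bundle structures on the underlying real bundle $TS^6$. I will fix orientations: $J$ induces an orientation on $TS^6$, and since $S^6$ is connected, $I$ induces either the same orientation or the opposite one. Replacing $I$ by $-I$ reverses the induced orientation in complex dimension $3$, since $(-1)^3=-1$. So we may assume $I$ and $J$ induce the same orientation; this is exactly where the sign $\pm$ comes from. Now compare the two orientation-compatible complex structures. Over the two hemispheres $D_\pm$ of $S^6$, the bundle $TS^6$ trivializes, and $I$ and $J$ become maps $D_\pm\to GL^+(6,\mathbb{R})/GL(3,\mathbb{C})$. This homogeneous space deformation retracts to $SO(6)/U(3)\cong \mathbb{CP}^3$. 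Since $D_\pm$ are contractible, on each hemisphere we can find a smooth $\psi_\pm$ with $I=\psi_\pm J\psi_\pm^{-1}$, because the fibration $GL^+(6,\mathbb{R})\to GL^+(6,\mathbb{R})/GL(3,\mathbb{C})$ admits lifts over contractible bases.

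The main obstacle is gluing the two local solutions over the equator $S^5$. The discrepancy between them is a map $S^5\to GL(3,\mathbb{C})$, up to homotopy modification by the clutching data. Concretely, complex vector bundle structures on the fixed oriented real bundle, up to isomorphism, are governed by homotopy classes of sections of the associated $SO(6)/U(3)$-bundle. The obstruction to deforming one section into another lives in $\pi_6(SO(6)/U(3))=\pi_6(\mathbb{CP}^3)\cong\pi_6(S^7)=0$, with lower obstructions lying in $\pi_k(\mathbb{CP}^3)$ for $k<6$. For a cleaner argument, I would instead use the classification directly: complex rank-$3$ bundles on $S^6$ correspond to $\pi_5(U(3))\cong\mathbb{Z}$ and are detected by $c_3$. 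The Chern class $c_3$ equals the Euler class of the underlying oriented real bundle, which is $e(TS^6)$ with value $2$ on $[S^6]$. Thus $(TS^6,I)$ and $(TS^6,J)$ have the same $c_3$, so they are isomorphic as complex vector bundles, and the desired $\psi$ exists. The step needing most care is this identification, namely that $c_3$ classifies rank-$3$ complex bundles on $S^6$ via $\pi_5(U(3))\cong\mathbb{Z}$ with $c_3$ equal to $\pm 2$ times the generator, up to the standard $(n-1)!$ factor. I would cite Bott periodicity together with the standard computation $c_n(\text{clutching by }S^{2n-1}\to U(n))=(n-1)!\cdot\deg$ to justify it.
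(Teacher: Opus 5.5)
Your proposal is correct and, once you drop the obstruction-theoretic detour, follows essentially the paper's route. Rank-$3$ complex bundles on $S^6$ are classified by $\pi_5(U(3))\cong\mathbb{Z}$ and detected injectively by $c_3$, and $c_3(TS^6,I)=c_3(TS^6,J)=e(TS^6)$ once the orientations agree; the only cosmetic differences are that you fix the sign by replacing $I$ with $-I$ rather than comparing against $-J$, and you justify injectivity of $c_3$ via the $(n-1)!$ clutching formula instead of the paper's observation that a homomorphism $\mathbb{Z}\to\mathbb{Z}$ taking the value $2$ must be injective.
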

\begin{proof}
Let $I$ be any almost complex structure on $S^6$ which induces the same canonical orientation as $J$.  Recall that the canonical orientation given I at $p\in S^6$ is given by choosing any $u_1,u_2,u_3\in T_pS^6$ for which 
$$
\{u_1,Iu_1,u_2,Iu_2,u_3,Iu_3\}
$$
is a basis of $T_pS^6$.  The canonical orientation given by $I$ at $p$ is then the orientation defined by the above basis.  Let $BU(3)$ be the classifying space associated to $U(3)$.  Recall that the isomorphism classes of complex vector bundles over $S^6$ ($\mathrm{Vect}_{\mathbb{C}}(S^6)$)  is in one to one correspondence with the space of homotopy classes of maps \cite{MilnorStasheff1974}
$$
[S^6,BU(3)].
$$
Note that each homotopy class in $[S^6,BU(3)]$ can always be represented by a smooth map $f:S^6\longrightarrow BU(3)$.  Then the aforementioned correspondence is given by
$$
[S^6,BU(3)]\stackrel{\sim}{\longrightarrow}\mathrm{Vect}_{\mathbb{C}}(S^6),\qquad f\mapsto f^\ast \gamma_3
$$
where $\gamma_3\longrightarrow BU(3)$ is the universal complex vector bundle of rank $3$.  

By Bott peirodicity, one has \cite{Milnor1963}
$$
[S^6,BU(3)]=\pi_6(BU(3))\simeq \pi_5(U(3))\simeq \mathbb{Z}.
$$
Let
$$
c_j\in H^{2j}(BU(3),\mathbb{Z})
$$
be the $j$-th Chern class associated to $\gamma_3$.  Then the $j$-th Chern class associated to a complex vector bundle $E\simeq f^\ast \gamma_3$ of rank $3$ with classifying map $f:S^6\longrightarrow BU(3)$ is
$$
c_j(E):=f^\ast c_j\in H^{2j}(S^6,\mathbb{Z}).
$$
Consider the map
$$
\varphi: \pi_6(BU(3))\longrightarrow \mathbb{Z},\qquad f\mapsto \int_{S^6}c_3(f^\ast\gamma_3),
$$
where we fix the orientation on $S^6$ to be the standard one (which coincides with the canonical orientation of $J$).

Let $(TS^6,J)$ and $(TS^6,I)$ denote the complex vector bundle given by $iX:=JX$ and $iX:=IX$ respectively for $X\in TS^6$.  Then one has 
$$
c_3(TS^6,J)=c_3(TS^6,I)=e(TS^6)
$$
where $e(TS^6)$ is the Euler class of $TS^6$ (cf. \cite{MilnorStasheff1974}) with $TS^6$ oriented via the the canonical orientation of $J$ (which is the same as that of $I$ by hypothesis).  Let $f: S^6\longrightarrow BU(3)$ be the classifying map associated to $(TS^6,J)$.  Then 
$$
\varphi(f)=\int_{S^6}c_3(TS^6,J)=\int_{S^6}e(TS^6)=\chi(S^6)=2.
$$
Let $f_0$ be a generator of $\pi_6(BU(3))\simeq \mathbb{Z}$.  Then $f=kf_0$ for some $k\in \mathbb{Z}$.  Since $\varphi$ is well known to be a group homomorphism (cf. \cite{MilnorStasheff1974}), we have
$$
\varphi(f)=\varphi(kf_0)=k\varphi(f_0)=2
$$
which implies that $\varphi(f_0)\neq 0$.  Hence, $\varphi$ is injective and we see that each distinct element of
$$
\mathrm{Vect}_{\mathbb{C}}(S^6)\stackrel{1-1}{\leftrightarrow} \pi_6(BU(3))
$$
corresponds to a unique integer under $\varphi$.  However, if $f': S^6\longrightarrow BU(3)$ is a classifying map for $(TS^6,I)$, we also have
$$
\varphi(f')=2.
$$
Hence, $(TS^6,J)\simeq (TS^6,I)$ as complex vector bundles which means there exists an automorphism $\psi\in \mbox{Aut}(TS^6)$ such that $I\psi=\psi J$ or, equivalently, $I=J^\psi$.   

From the definition of the canonical orientation of an almost complex structure, we see that $-J$ induces the orientation opposite that of $J$ and 
$$
c_3(TS^6,-J)=-c_3(TS^6,J).
$$
Hence, if $h: S^6\longrightarrow BU(3)$ is a classifying map for $(TS^6,-J)$, we have $\varphi(h)=-2$ which shows that $(TS^6,-J)$ and $(TS^6,J)$ are not isomorphic as complex vector bundles.  So if $K$ is another almost complex structure on $S^6$ with orientation same as that of $-J$, the same argument given above gives $(TS^6,-J)\simeq (TS^6,K)$.  Combining these two cases, it follows that every possible almost complex structure on $S^6$ is of the form $\pm J^\psi$ for $\psi\in \mbox{Aut}(TS^6)$.

\end{proof}
\noindent  The recent Claude AI-assisted work of Levent Alp\"{o}ge \cite{AC26} is currently being studied by the mathematical community.  However, there is strong optimism that the proof is correct which would resolve the Hopf problem in the positive.  Alp\"{o}ge's proof does not explicitly construct the integrable almost complex structure on $S^6$.   However, if the the proof turns out to be correct, then $S^6$ does admit an integrable almost complex structure.  Proposition \ref{propACSonS6} then implies that there exists a $\psi\in \mbox{Aut}(TS^6)$ for which $J^\psi$ is integrable.  It would be quite astounding if such a $\psi$ could be given explicitly.

While we are unable to resolve Problem \ref{conjPsiTwist} at the present time, we are able to find a definitive solution for a special class of automorphisms which we call $g$-\textit{Codazzi maps} (see Definition \ref{defgCodazzi} and Theorem \ref{thmS6Twist}).  The notion of a $g$-Codazzi map arises naturally when one considers the Levi-Civita connection of an arbitrary $\psi$-twisted Riemannian metric and observes that the aforementioned formula for the Levi-Civita connection simplifies drastically when one imposes the Codazzi condition (see Proposition \ref{propLeviCivita}).   

The rest of the paper is organized as follows.   In Section \ref{SectionTwistedMetrics}, we study the Riemannian geometry of $\psi$-twisted metrics with particular emphasis on $g$-Codazzi maps.  In Section \ref{secgCodazziSn}, we study $g$-Codazzi maps on the round sphere and prove a number of results.  Lastly, in Section \ref{SectionIntSixSphere}, we study the integrability of almost complex structures twisted by $g$-Codazzi maps.  The results of this section are combined with those of Section \ref{secgCodazziSn} to obtain a complete resolution to Problem \ref{conjPsiTwist} for the special case of $g$-Codazzi maps.  We note that the beautiful result of  Bor$\backslash$Hernandez-Lamoneda in \cite{BorHernandezLamoneda1999}, when used in conjunction with our curvature operator formula (see Proposition \ref{propCurvatureOperator}), is only able to provide a partial solution to Problem \ref{conjPsiTwist} for the special case of $g$-Codazzi maps.  The reason for this limitation is due to the fact that the result of \cite{BorHernandezLamoneda1999} is formulated in terms of an inequality condition involving the eigenvalues of the curvature operator.  When this condition is satisfied, one can conclude that the twisted almost complex structure is nonintegrable.  However, one can construct $g$-Codazzi maps which fail to satisfy the aforementioned inequality condition. When this occurs, the result of \cite{BorHernandezLamoneda1999} is no longer applicable.  We demonstrate this later with an example (along with other concrete examples to illustrate our results).

\section{The Riemannian geometry of twisted metrics}
\label{SectionTwistedMetrics}
\noindent We begin by fixing our notation and conventions.  Let $(M,g,J,\omega)$ be an almost Hermitian manifold with metric $g$, almost complex structure $J$, and fundamental form $\omega:=g(J\cdot,\cdot)$.  We let
$$
\flat_g: TM\stackrel{\sim}{\rightarrow} T^\ast M,\hspace*{0.1in} X\mapsto X^{\flat_g}
$$
$$
\sharp_g:T^\ast M\stackrel{\sim}{\rightarrow} TM,\hspace*{0.1in} \alpha \mapsto \alpha^{\sharp_g}
$$
denote the musical isomorphisms induced by $g$.  $\flat_g$ and $\sharp_g$ are extended naturally to $\wedge^kTM$ and $\wedge^k T^\ast M$ respectively by letting them act on each factor of the wedge product.  The metric $g$ is extended to $\wedge^k TM$ in the usual way and we let $g^{-1}$ denote the induced metric on $\wedge^k T^\ast M$.  

We denote the Levi-Civita connection of $g$ by $\nabla^g$.  We define the curvature endomorphism $R^g\in \Gamma(\wedge^2T^\ast M\otimes\mathrm{End}(TM))$ by
$$
R^g(X,Y)Z:=\nabla^g_X\nabla^g_YZ-\nabla^g_Y\nabla^g_XZ-\nabla^g_{[X,Y]}Z.
$$
The Riemann curvature tensor is denoted as $\mathrm{Rm}^g$ and is defined by
 $$
 \mathrm{Rm}^g(X,Y,Z,W):=g(R^g(X,Y)Z,W).
 $$
 We recall that $\mathrm{Rm}^g$ has the following symmetries:
 \begin{itemize}
 \item[(i)] $\mathrm{Rm}^g(Y,X,Z,W)=-\mathrm{Rm}^g(X,Y,Z,W)$
 \item[(ii)] $\mathrm{Rm}^g(X,Y,W,Z)=-\mathrm{Rm}^g(X,Y,Z,W)$
 \item[(iii)] $\mathrm{Rm}^g(X,Y,Z,W)=\mathrm{Rm}^g(Z,W,X,Y)$.
 \end{itemize}
 As a consequence of this, we can also regard $\mathrm{Rm}^g$ as a symmetric bilinear form on $\wedge^2 TM$.  We define the curvature operator on 2-forms 
 $$
 \mathcal{R}^g: \wedge^2 T^\ast M\rightarrow \wedge^2 T^\ast M
 $$
 by
 $$
 g^{-1}(\beta,\mathcal{R}^g(\gamma))=-\mbox{Rm}^g(\beta^{\sharp_g},\gamma^{\sharp_g})
 $$
 for $\beta,\gamma\in \Omega^2(M)$.  Since $\mathrm{Rm}^g$ is also a symmetric bilinear form on $\wedge^2 TM$, we note that $\mathcal{R}^g$ is also self-adjoint with respect to $g^{-1}$.  Equivalently, we may also define $\mathcal{R}^g$ by
$$
(\mathcal{R}^g(\gamma))(X,Y):=-\mathrm{Rm}^g(X\wedge Y,\gamma^{\sharp_g})
$$
for $X,Y\in \mathfrak{X}(M)$.  Lastly, if $\psi\in \mathrm{End}(TM)$, we denote its adjoint with respect to $g$ by $\psi^\dagger$, that is,
$$
g(\psi X,Y)=g(X,\psi^\dagger Y).
$$

With our notation and conventions finally fixed, we now begin with the following result which relates the Levi-Civita connection of $g^\psi$ with that of $g$ where $\psi\in \mathrm{Aut}(TM)$.
\begin{proposition}
\label{propLeviCivita}
Let $\psi\in \mbox{Aut}(TM)$ and let $h:=g^\psi$.  Then
\begin{align}
\nonumber
&2h(\nabla^h_XY,Z)=2h(\nabla^g_XY,Z)+h(\psi(\nabla^g_X\psi^{-1})Y,Z)\\
\nonumber
&+h(\psi(\nabla^g_Y\psi^{-1})X,Z)+h(\psi(\nabla^g_X\psi^{-1})Z,Y)\\
\nonumber
&-h(\psi(\nabla^g_Z\psi^{-1})X,Y)+h(\psi(\nabla^g_Y\psi^{-1})Z,X)\\
\label{eqLCA}
&-h(\psi(\nabla^g_Z\psi^{-1})Y,X).
\end{align}
In particular, if $(\nabla^g_X\psi^{-1})Y=(\nabla^g_Y\psi^{-1})X$, then
\begin{align}
\label{eqLCB}
\nabla^h_XY=\psi\nabla^g_X(\psi^{-1}Y).
\end{align}
\end{proposition}
\begin{proof}
Using the Kozul formula, we have
\begin{align}
\nonumber
2h(\nabla^h_XY,Z)&=Xh(Y,Z)+Yh(Z,X)-Zh(X,Y)\\
\label{eqLC1}
&+h([X,Y],Z)-h([X,Z],Y)-h([Y,Z],X).
\end{align}
From the definition of $h:=g^\psi$, we have
\begin{align}
\nonumber
&Xh(Y,Z)=Xg(\psi^{-1} Y,\psi^{-1} Z)\\
\nonumber
&=g(\nabla^g_X(\psi^{-1}Y),\psi^{-1}Z)+g(\psi^{-1}Y,\nabla^g_X(\psi^{-1}Z))\\
\nonumber
&=g((\nabla^g_X\psi^{-1})Y,\psi^{-1}Z)+g(\psi^{-1}\nabla^g_XY,\psi^{-1}Z)\\
\nonumber
&+g(\psi^{-1}Y,(\nabla^g_X\psi^{-1})Z)+g(\psi^{-1}Y,\psi^{-1}\nabla^g_XZ)\\
\nonumber
&=h(\psi(\nabla^g_X\psi^{-1})Y,Z)+h(\nabla^g_XY,Z)\\
\label{eqLC2}
&+h(Y,\psi(\nabla^g_X\psi^{-1})Z)+h(Y,\nabla^g_XZ).
\end{align}
Permuting $X$, $Y$, and $Z$ in (\ref{eqLC2}) gives
\begin{align}
\nonumber
Yh(Z,X)&=h(\psi(\nabla^g_Y\psi^{-1})Z,X)+h(\nabla^g_YZ,X)\\
\label{eqLC3}
&+h(Z,\psi(\nabla^g_Y\psi^{-1})X)+h(Z,\nabla^g_YX)
\end{align}
and
\begin{align}
\nonumber
Zh(X,Y)&=h(\psi(\nabla^g_Z\psi^{-1})X,Y)+h(\nabla^g_ZX,Y)\\
\label{eqLC4}
&+h(X,\psi(\nabla^g_Z\psi^{-1})Y)+h(X,\nabla^g_ZY).
\end{align}
Since $\nabla$ is torsion free, we also have
\begin{align}
\label{eqLC5}
h([X,Y],Z)&=h(\nabla^g_XY,Z)-h(\nabla^g_YX,Z),\\
\label{eqLC6}
h([X,Z],Y)&=h(\nabla^g_XZ,Y)-h(\nabla^g_ZX,Y),\\
\label{eqLC7}
h([Y,Z],X)&=h(\nabla^g_YZ,X)-h(\nabla^g_ZY,X).
\end{align}
Substituting (\ref{eqLC2})-(\ref{eqLC7}) into (\ref{eqLC1}) gives (\ref{eqLCA}).  Lastly, if 
$$
(\nabla^g_X\psi^{-1})Y=(\nabla^g_Y\psi^{-1})X,
$$ 
then (\ref{eqLCA}) reduces to 
\begin{align*}
h(\nabla^h_XY,Z)&=h(\nabla^g_XY,Z)+h(\psi(\nabla^g_X\psi^{-1})Y,Z)\\
&=h(\nabla^g_XY+\psi(\nabla^g_X\psi^{-1})Y,Z)\\
&=h(\psi \nabla^g_X(\psi^{-1}Y),Z),
\end{align*}
which implies (\ref{eqLCB}).
\end{proof}

\noindent Motivated by Proposition \ref{propLeviCivita}, we make the following definition:
\begin{definition}
\label{defgCodazzi}
A bundle automorphism $\psi:TM\stackrel{\sim}{\rightarrow} TM$ is called a \textit{$g$-Codazzi map} if $(\nabla^g_X\psi^{-1})Y=(\nabla^g_Y\psi^{-1})X$.
\end{definition}
\noindent We point out that formula (\ref{eqLCB}), which is a special case of Proposition \ref{propLeviCivita}, appeared originally in the work of Hicks \cite{Hicks1965}. 

In the literature, there are two different, but related notions of what a Codazzi tensor is.  In the Riemannian geometry viewpoint, a Codazzi tensor  \cite{Spivak1979,doCarmo1992,DerdzinskiShen1983,NomizuSmyth1969} on a Riemannian manifold $(M,g)$ is a symmetric $(0,2)$-tensor $A$ satisfying
\begin{equation}
\label{eqCodazziClassic}
(\nabla^g_X A)(Y,Z)=(\nabla^g_Y A)(X,Z).
\end{equation}
In the affine connection viewpoint \cite{DerdzinskiShen1983}, a Codazzi tensor for the pair $(M,\nabla)$ where $\nabla$ is a fixed affine connection on $M$ is an endomorphism $B\in \mathrm{End}(TM)$ satisfying
\begin{equation}
\label{eqCodazziAffine}
(\nabla_XB)Y=(\nabla_YB)X.
\end{equation}
From the affine connection viewpoint, a $g$-Codazzi map is an automorphism $\psi\in \mathrm{Aut}(TM)$ such that $\psi^{-1}$ is Codazzi tensor for ther pair $(M,\nabla^g)$.  Note that $\psi$ itself is not a Codazzi tensor for the pair $(M,\nabla^g)$ since
\begin{equation}
\label{eqPsiSwitch}
(\nabla^g_X\psi)Y=(\nabla^g_{\psi Y}\psi)(\psi^{-1}X).
\end{equation}
In the current paper, we will always use ``Codazzi tensor" in the Riemannian geometry sense.  After reconciling the definition of a $g$-Codazzi map with the affine connection viewpoint of a Codazzi tensor, the following result\footnote{The author arrived at Proposition \ref{propCodazzi} independently and became aware of the result by Hicks \cite{Hicks1965} only later through the paper of Derdzinski and Shen \cite{DerdzinskiShen1983}.} is attributed to Hicks \cite{Hicks1965}.  We give a proof of the result for the sake of keeping the current paper as self-contained as possible.  
\begin{proposition}[Hicks, \cite{Hicks1965}]
\label{propCodazzi}
Let $(M,g)$ be a Riemannian manifold.  Let $\mathcal{C}$ denote the set of non-degenerate Codazzi tensors on $M$ and let $\mathcal{S}$ denote the set of automorphisms of $TM$ which are both $g$-Codazzi and self-adjoint with respect to $g$.  Then there is a one-to-one correspondence between $\mathcal{C}$ and $\mathcal{S}$.
\end{proposition}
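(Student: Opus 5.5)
The correspondence I will build sends a non-degenerate Codazzi tensor $A\in\mathcal{C}$ to its metric dual. Let $B\in\mathrm{End}(TM)$ be defined by $A(Y,Z)=g(BY,Z)$. Since $A$ is symmetric, $B$ is self-adjoint with respect to $g$. Since $A$ is non-degenerate, $B$ is pointwise invertible, so $B\in\mathrm{Aut}(TM)$. I then set
$$
\Phi(A):=\psi:=B^{-1}.
$$
The inverse map sends $\psi\in\mathcal{S}$ to $A_\psi(Y,Z):=g(\psi^{-1}Y,Z)$. Three things must be checked: that $\Phi(A)$ lands in $\mathcal{S}$, that $A_\psi$ lands in $\mathcal{C}$, and that the two maps are mutually inverse.

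The key step is a single identity relating the two Codazzi conditions. Because $\nabla^g g=0$, differentiating $A(Y,Z)=g(BY,Z)$ gives
$$
(\nabla^g_XA)(Y,Z)=g((\nabla^g_XB)Y,Z)
$$
for all $X,Y,Z$. Hence
$$
(\nabla^g_XA)(Y,Z)-(\nabla^g_YA)(X,Z)=g\big((\nabla^g_XB)Y-(\nabla^g_YB)X,\,Z\big).
$$
Since $g$ is non-degenerate, $A$ satisfies (\ref{eqCodazziClassic}) if and only if $(\nabla^g_XB)Y=(\nabla^g_YB)X$. With $B=\psi^{-1}$, this is exactly Definition \ref{defgCodazzi}. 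So $A$ is Codazzi if and only if $\psi$ is $g$-Codazzi.

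The remaining checks are routine. If $\psi\in\mathcal{S}$, then $\psi^{-1}$ is also $g$-self-adjoint, because $(\psi^{-1})^\dagger=(\psi^\dagger)^{-1}=\psi^{-1}$. Therefore $A_\psi$ is symmetric, and it is non-degenerate since $\psi^{-1}$ is invertible. By the identity above, $A_\psi$ is Codazzi, so $A_\psi\in\mathcal{C}$. Conversely, $\Phi(A)=B^{-1}$ is self-adjoint by the same remark and is $g$-Codazzi by the identity, so $\Phi(A)\in\mathcal{S}$. The two constructions are visibly inverse to each other: $A\mapsto B\mapsto B^{-1}$ versus $\psi\mapsto\psi^{-1}\mapsto g(\psi^{-1}\cdot,\cdot)$. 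Hence $\Phi$ is a bijection $\mathcal{C}\to\mathcal{S}$.

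There is no real obstacle here. The only point needing care is that the Codazzi condition is imposed on $\psi^{-1}$ rather than on $\psi$ (cf. (\ref{eqPsiSwitch})). That is why the correspondence must send $A$ to the inverse of its dual endomorphism and not to the endomorphism itself.
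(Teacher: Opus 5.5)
Your proposal is correct and follows essentially the same route as the paper. Both define $\psi$ through $A(X,Y)=g(\psi^{-1}X,Y)$, get self-adjointness from the symmetry of $A$, and use the identity $(\nabla^g_ZA)(X,Y)=g((\nabla^g_Z\psi^{-1})X,Y)$ to transfer the Codazzi symmetry in both directions, with the two constructions visibly inverse to each other.
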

\begin{proof}
Let $A\in \mathcal{C}$ and let $\psi: TM\stackrel{\sim}{\rightarrow} TM$ be the unique bundle automorphism defined by
\begin{equation}
\label{eqCodazzi1}
A(X,Y)=g(\psi^{-1} X,Y).
\end{equation}
The symmetry of $A$ implies that $\psi^{-1}$ (and hence $\psi$) is self-adjoint:
\begin{align*}
g(\psi^{-1}  X,Y)=A(X,Y)=A(Y,X)=g(\psi^{-1}  Y,X)=g(X,\psi^{-1}  Y).
\end{align*}
To show that $\psi$ is a $g$-Codazzi map, we let a vector field $Z$ act on both sides of (\ref{eqCodazzi1}).  The left side is then
\begin{equation}
\label{eqCodazzi2}
ZA(X,Y)=(\nabla^g_Z A)(X,Y)+A(\nabla^g_Z X,Y)+A(X,\nabla^g_ZY).
\end{equation}
The right side is
\begin{align}
\nonumber
Zg(\psi^{-1} X,Y)&=g((\nabla^g_Z\psi^{-1})X,Y)+g(\psi^{-1} \nabla^g_Z X,Y)+g(\psi^{-1} X,\nabla^g_ZY)\\
\label{eqCodazzi3}
&=g((\nabla^g_Z\psi^{-1})X,Y)+A(\nabla^g_ZX,Y)+A(X,\nabla^g_ZY).
\end{align}
Equations (\ref{eqCodazzi2}) and (\ref{eqCodazzi3}) imply
\begin{equation}
\label{eqCodazzi4}
g((\nabla^g_Z\psi^{-1})X,Y)=(\nabla^g_Z A)(X,Y)=(\nabla^g_X A)(Z,Y)=g((\nabla^g_X\psi^{-1})Z,Y).
\end{equation}
Since $g$ is nondegenerate, we have $(\nabla^g_Z\psi^{-1})X=(\nabla^g_X\psi^{-1})Z$.  Hence, $\psi\in \mathcal{S}$. On the other hand, if we start with a $\psi\in \mathcal{S}$ and use (\ref{eqCodazzi1}) to define $A$, the same exact calculation shows that $A\in\mathcal{C}$.  Lastly, since everything is derived from (\ref{eqCodazzi1}), the maps $\mathcal{C}\rightarrow \mathcal{S}$ and $\mathcal{S}\rightarrow \mathcal{C}$ are clearly the inverse of one another.  This completes the proof.
\end{proof}

The simple transformation law of (\ref{eqLCB}) makes $g$-Codazzi maps a natural starting point for our study of $\psi$-twisted almost Hermitian structures.  For this reason, the current paper will focus almost exclusively on this particular class of automorphisms.  We regard $g$-Codazzi maps as an important stepping stone in our study of $\psi$-twisted almost Hermitian structures. 

Codazzi tensors arise in a number of different ways (see \cite{DerdzinskiShen1983} for examples).  One way (which is relevant to the current paper) is on Riemannian manifolds of constant sectional curvature.  Explicitly, if $(M,g)$ has constant sectional curvature $\kappa$, then for any smooth function $f$ on $M$ and any constant $c\in \mathbb{R}$, we obtain a Codazzi tensor $A_{f,c}$ with the following formula (cf \cite{Spivak1979,DerdzinskiShen1983}) :   
\begin{equation}
\label{eqCodazzi5}
A_{f,c}:=\mathrm{Hess}_gf + (\kappa f+c)g,
\end{equation}
where $(\mathrm{Hess}_gf)(X,Y):=(\nabla^g_Xdf)(Y)$ is the Hessian of $f$.  One can verify that $A_{f,c}$ is a Codazzi tensor by direct calculation and by recalling that on a manifold with constant sectional curvature $\kappa$, $R^g$ is given by the following simple formula:
$$
R^g(X,Y)Z = \kappa(g(Y,Z)X-g(X,Z)Y).
$$
Moreover, if $(M,g)$ is also simply connected, then every Codazzi tensor is of the form given by (\ref{eqCodazzi5}) (see e.g. \cite{Besse1987}).  Note that if $M$ is also compact, we can always choose $c$ to be a sufficiently large positive (or negative) number so that $A_{f,c}$ is either positive or negative definite (and hence nondgenerate).   Consequently, the $n$-dimensional round sphere $S^n$ has an abundance of $g$-Codazzi maps.  Of course, the case of interest in this paper is the round $6$-sphere due to its nearly K\"{a}hler structure.  Hence, there is no shortage of twists by $g$-Codazzi maps that one can apply to the standard almost complex structure on $S^6$. We will prove later in Theorem \ref{thmS6Twist} that the standard almost complex structure on $S^6$ remains non-integrable after twisting by any $g$-Codazzi map.   

From Proposition \ref{propCodazzi}, a self-adjoint $g$-Codazzi map is equivalent to a nondegenerate Codazzi tensor.   However, it is easy to give examples of $g$-Codazzi maps which are not self-adjoint.  
\begin{example}
\label{exCodazziCounter}
Let $(M,g,J,\omega)$ be any K\"{a}hler manifold.  Let $\psi= a\cdot \mbox{id}+bJ$ where $a,b$ are nonzero constants.  Then 
$$
\psi^{-1}=\frac{1}{a^2+b^2}(a\cdot \mbox{id}-bJ).
$$ 
Since $\nabla^g J=0$ for a K\"{a}hler manifold, we have $\nabla^g_X\psi^{-1}=0$.  In particular, $(\nabla^g_X\psi^{-1})Y=(\nabla^g_Y\psi^{-1})X$ which shows that $\psi$ is a $g$-Codazzi map. Moreover, $\psi$ is not self-adjoint: 
$$
\psi^\dagger=a\cdot \mbox{id}-bJ\neq \psi,\hspace*{0.1in}\forall~b\neq 0.
$$
\end{example}
\noindent For the round sphere $(S^n,g)$, we will  show later that every $g$-Codazzi map must be self-adjoint for $n\ge 3$ (see  Theorem \ref{thmgCodazziSnSA}).  This observation is most likely well known to experts.  However, the author has not been able to find an explicit reference for it.  The next result relates the curvature endomorphism $R^h$ to $R^g$ for $h:=g^\psi$.  This result\footnote{The author arrived at Proposition \ref{propCurvatureEndomorphism} independently and became aware of the result by Hicks \cite{Hicks1965} only later through the paper of Derdzinski and Shen \cite{DerdzinskiShen1983}.} was originally proved by Hicks \cite{Hicks1965}.  We include the proof for the sake of completeness.  
\begin{proposition}[Hicks, \cite{Hicks1965}]
\label{propCurvatureEndomorphism}
Let $\psi\in \mbox{Aut}(TM)$ be a $g$-Codazzi map.  Let $h:=g^\psi$.  Then $R^h(X,Y)=\psi\circ R^g(X,Y)\circ \psi^{-1}$.  In particular, $g$ is flat if and only if $g^\psi$ is flat.
\end{proposition}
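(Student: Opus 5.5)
The proof rests on formula (\ref{eqLCB}) of Proposition \ref{propLeviCivita}. Since $\psi$ is a $g$-Codazzi map, the Levi-Civita connection of $h=g^\psi$ is the conjugate connection
$$
\nabla^h_XY=\psi\nabla^g_X(\psi^{-1}Y).
$$
The plan is to substitute this directly into the definition of the curvature endomorphism and watch the $\psi$'s telescope.

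First compute the second covariant derivative. Apply the formula twice:
$$
\nabla^h_X\nabla^h_YZ=\psi\nabla^g_X\bigl(\psi^{-1}\psi\nabla^g_Y(\psi^{-1}Z)\bigr)=\psi\nabla^g_X\nabla^g_Y(\psi^{-1}Z).
$$
The inner $\psi^{-1}\psi$ cancels because $\psi$ is $C^\infty(M)$-linear and invertible. Swapping $X$ and $Y$ gives the analogous expression. Next,
$$
\nabla^h_{[X,Y]}Z=\psi\nabla^g_{[X,Y]}(\psi^{-1}Z).
$$
Subtracting, we get
$$
R^h(X,Y)Z=\psi\, R^g(X,Y)(\psi^{-1}Z),
$$
which is the claim.

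For the final assertion, $\psi$ is an isomorphism. Hence $R^h(X,Y)=\psi\circ R^g(X,Y)\circ\psi^{-1}$ vanishes for all $X,Y$ if and only if $R^g(X,Y)$ does. So $g$ is flat exactly when $g^\psi$ is.

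There is essentially no obstacle here. The only point to be careful about is that (\ref{eqLCB}) really defines the Levi-Civita connection of $h$, and Proposition \ref{propLeviCivita} already guarantees this under the Codazzi hypothesis. As a sanity check one could verify directly that the conjugate connection $\psi\nabla^g\psi^{-1}$ is $h$-metric, which holds for any $\psi$. It is torsion-free precisely because its torsion is $\psi\bigl((\nabla^g_X\psi^{-1})Y-(\nabla^g_Y\psi^{-1})X\bigr)$, and this vanishes by the Codazzi condition.
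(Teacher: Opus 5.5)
Your proof is correct and is essentially the paper's argument. Both substitute $\nabla^h_XY=\psi\nabla^g_X(\psi^{-1}Y)$ from Proposition \ref{propLeviCivita} into the definition of $R^h$ and let the inner $\psi^{-1}\psi$ cancel, and your closing check that the conjugate connection is $h$-metric and torsion-free under the Codazzi condition is also correct.
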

\begin{proof}
By Proposition \ref{propLeviCivita}, we have 
\begin{align*}
\nabla^h_X\nabla^h_YZ=\psi\nabla^g_X\nabla^g_Y(\psi^{-1}Z)
\end{align*}
From this, we have
\begin{align*}
R^h(X,Y)Z&=\nabla^h_X\nabla^hYZ-\nabla^h_Y\nabla^h_X Z-\nabla^h_{[X,Y]}Z\\
&=\psi(\nabla^g_X\nabla^g_Y(\psi^{-1}Z)-\nabla^g_Y\nabla^g_X(\psi^{-1}Z)-\nabla^g_{[X,Y]}(\psi^{-1}Z))\\
&=\psi R^g(X,Y)(\psi^{-1}Z)\\
&=(\psi\circ R^g(X,Y)\circ \psi^{-1})Z.
\end{align*}
\end{proof}
\noindent The next result is a formula relating the curvature operator on 2-forms $\mathcal{R}^h$ to $\mathcal{R}^g$ for $h:=g^\psi$.  As far as the author can tell, this particular formula does not appear in the paper of Hicks \cite{Hicks1965},  Derdzinski and Shen \cite{DerdzinskiShen1983}, or anywhere else.  
\begin{proposition}
\label{propCurvatureOperator}
Let $\psi\in \mbox{Aut}(TM)$ be a $g$-Codazzi map and let $h:=g^\psi$.  Then $\mathcal{R}^h =\mathcal{R}^g\circ \psi^\ast$, where $\psi^\ast$ is just the pull back on 2-forms, that is, for $\beta\in \wedge^2 T^\ast M$, $\psi^\ast\beta:=\beta(\psi\cdot, \psi\cdot)$.
\end{proposition}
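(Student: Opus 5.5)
The plan is to work with the pointwise characterization
$$
(\mathcal{R}^h(\gamma))(X,Y)=-\mathrm{Rm}^h(X\wedge Y,\gamma^{\sharp_h}),
$$
and reduce everything to $g$-quantities using Proposition \ref{propCurvatureEndomorphism}. Both sides of the claimed identity are $C^\infty(M)$-linear (indeed pointwise linear) in $\gamma$. It therefore suffices to verify it on decomposable $2$-forms $\gamma=\alpha\wedge\beta$ with $\alpha,\beta\in T^\ast_pM$, and then extend by linearity.

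The first step is to express $\mathrm{Rm}^h$ in terms of $\mathrm{Rm}^g$. By Proposition \ref{propCurvatureEndomorphism}, $R^h(X,Y)=\psi\circ R^g(X,Y)\circ\psi^{-1}$. Using $h=g(\psi^{-1}\cdot,\psi^{-1}\cdot)$, this gives
$$
\mathrm{Rm}^h(X,Y,Z,W)=g\bigl(\psi^{-1}\psi R^g(X,Y)\psi^{-1}Z,\psi^{-1}W\bigr)=\mathrm{Rm}^g(X,Y,\psi^{-1}Z,\psi^{-1}W).
$$
Viewed as a bilinear form on bivectors, this reads $\mathrm{Rm}^h(X\wedge Y,\xi)=\mathrm{Rm}^g(X\wedge Y,\psi^{-1}\xi)$ for $\xi\in\wedge^2TM$. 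Here $\psi^{-1}$ acts on $\wedge^2TM$ factorwise, i.e.\ $\psi^{-1}(Z\wedge W)=\psi^{-1}Z\wedge\psi^{-1}W$. The unpleasant $\psi$-factors thus sit only in the second slot.

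The second step, which I expect to be the only point needing care, is to relate the two musical isomorphisms. For a $1$-form $\alpha$, the vector $\alpha^{\sharp_h}$ is defined by $h(\alpha^{\sharp_h},U)=\alpha(U)$ for all $U$. Substituting $U=\psi V$ gives
$$
g(\psi^{-1}\alpha^{\sharp_h},V)=\alpha(\psi V)=(\psi^\ast\alpha)(V),
$$
so $\psi^{-1}\alpha^{\sharp_h}=(\psi^\ast\alpha)^{\sharp_g}$. Since $\sharp$ and $\psi^{-1}$ both act factorwise on wedge products, this yields
$$
\psi^{-1}\gamma^{\sharp_h}=(\psi^\ast\gamma)^{\sharp_g}
$$
for every $2$-form $\gamma$. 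The care needed is only to check that the pullback convention $\psi^\ast\beta=\beta(\psi\cdot,\psi\cdot)$ matches this direction, and not the one involving $\psi^{-1}$ or $\psi^\dagger$. The computation above confirms that it does.

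Combining the two steps gives
$$
(\mathcal{R}^h(\gamma))(X,Y)=-\mathrm{Rm}^g\bigl(X\wedge Y,\psi^{-1}\gamma^{\sharp_h}\bigr)=-\mathrm{Rm}^g\bigl(X\wedge Y,(\psi^\ast\gamma)^{\sharp_g}\bigr)=(\mathcal{R}^g(\psi^\ast\gamma))(X,Y).
$$
This holds for all $X,Y$, so $\mathcal{R}^h=\mathcal{R}^g\circ\psi^\ast$. Notably, the argument never uses self-adjointness of $\psi$; the $g$-Codazzi hypothesis enters only through Proposition \ref{propCurvatureEndomorphism}.
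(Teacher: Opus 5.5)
Your proof is correct and follows the paper's argument. Both rest on Proposition~\ref{propCurvatureEndomorphism}, which gives $\mathrm{Rm}^h(\xi,\zeta)=\mathrm{Rm}^g(\xi,\psi^{-1}\zeta)$, and on the identity $\psi^{-1}(\gamma^{\sharp_h})=(\psi^\ast\gamma)^{\sharp_g}$ from $\sharp_h=\psi\circ\sharp_g\circ\psi^\ast$. The one difference is that you use the evaluation form $(\mathcal{R}^h(\gamma))(X,Y)=-\mathrm{Rm}^h(X\wedge Y,\gamma^{\sharp_h})$ rather than the $h^{-1}$-pairing. As a result the first slot never needs converting, and you skip the paper's detour through $\beta^{\sharp_h}=\psi(\psi^\ast\beta)^{\sharp_g}$, the auxiliary form $\lambda$, and the comparison $h^{-1}=g^{-1}(\psi^\ast\cdot,\psi^\ast\cdot)$.
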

\begin{proof}
Let $p\in M$ and let $\beta,\gamma\in \wedge^2 T^\ast_pM$.  Without loss of generality, we may assume $\beta=\beta_1\wedge \beta_2$ and $\gamma=\gamma_1\wedge \gamma_2$ where $\beta_i,\gamma_j\in T^\ast_pM$. Using Proposition \ref{propCurvatureEndomorphism}, we have
\begin{align}
\nonumber
h^{-1}(\beta,\mathcal{R}^h(\gamma))&:=-\mbox{Rm}^h(\beta^{\sharp_h},\gamma^{\sharp_h})\\
\nonumber
&=-h(R^h(\beta_1^{\sharp_h},\beta_2^{\sharp_h})\gamma_1^{\sharp_h},\gamma_2^{\sharp_h})\\
\nonumber
&=-g(R^g(\beta_1^{\sharp_h},\beta_2^{\sharp_h})\psi^{-1}\gamma_1^{\sharp_h},\psi^{-1}\gamma_2^{\sharp_h})\\
\label{eqCO1}
&=-\mbox{Rm}^g(\beta^{\sharp_h},\psi^{-1}(\gamma^{\sharp_h}))
\end{align}
where $\psi^{-1}: \wedge^2 TM\rightarrow \wedge^2 TM$ is defined by
$$
\psi^{-1}(X_1\wedge X_2):=(\psi^{-1}X_1)\wedge (\psi^{-1}X_2)
$$  
for $X_1,X_2\in T_pM$.  We now express $\sharp_h$ in terms of $\sharp_g$.  This is just a matter of unpacking the definitions.   Let $Y\in T_pM$ and $\sigma\in T^\ast_pM$.  For convenience, write 
$$
\flat_g(Y):=Y^{\flat_g},\hspace*{0.1in} \flat_h(Y):=Y^{\flat_h}
$$
and
$$
\sharp_g(\sigma):=\sigma^{\sharp_g},\hspace*{0.1in}\sharp_h(\sigma):=\sigma^{\sharp_h}.
$$
Then
\begin{align}
\nonumber
\flat_h(Y)&=h(Y,\cdot)\\
\nonumber
&=g(\psi^{-1}Y,\psi^{-1}\cdot)\\
\nonumber
&=\flat_g(\psi^{-1}Y)\circ \psi^{-1}\\
\nonumber
&=(\psi^{-1})^\ast \left(\flat_g(\psi^{-1}Y)\right).
\end{align}
From this, we have
\begin{equation}
\label{eqFlatGH}
\flat_h=(\psi^{-1})^\ast \circ \flat_g\circ \psi^{-1}.
\end{equation}
Taking the inverse on both sides of (\ref{eqFlatGH}) gives
\begin{equation}
\label{eqSharpGH}
\sharp_h=\psi\circ \sharp_g\circ \psi^\ast.
\end{equation}
Applying (\ref{eqSharpGH}) to $\beta^{\sharp_h}$ gives
\begin{equation}
\label{eqSharpBeta}
\beta^{\sharp_h}=\psi (\psi^\ast\beta)^{\sharp_g}
\end{equation}
where (again) $\psi: \wedge^2TM\rightarrow \wedge^2 TM$ is defined by letting $\psi$ act on each factor of the wedge product, that is, 
$$
\psi(X_1\wedge X_2):=(\psi X_1)\wedge (\psi X_2),
$$  
for $X_1,X_2\in T_pM$.  Likewise, applying (\ref{eqSharpGH}) to $\psi^{-1}(\gamma^{\sharp_h})$ gives
\begin{equation}
\label{eqSharpGamma}
\psi^{-1}(\gamma^{\sharp_h})=(\psi^\ast \gamma)^{\sharp_g}.
\end{equation}
Substituting (\ref{eqSharpBeta}) and (\ref{eqSharpGamma}) into (\ref{eqCO1}) gives
\begin{align}
\label{eqCO2}
h^{-1}(\beta,\mathcal{R}^h(\gamma))&=-\mbox{Rm}^g(\psi (\psi^\ast\beta)^{\sharp_g},(\psi^\ast \gamma)^{\sharp_g}).
\end{align}
Let $\lambda\in \wedge^2T^\ast_pM$ be defined by
\begin{equation}
\label{eqCO2a}
\lambda^{\sharp_g}=\psi (\psi^\ast\beta)^{\sharp_g}.
\end{equation}
Then (\ref{eqCO2}) can be rewritten as
\begin{align}
\nonumber
h^{-1}(\beta,\mathcal{R}^h(\gamma))&=-\mbox{Rm}^g(\lambda^{\sharp_g},(\psi^\ast \gamma)^{\sharp_g})\\
\nonumber
&=g^{-1}(\lambda,\mathcal{R}^g(\psi^\ast \gamma))\\
\nonumber
&=\mathcal{R}^g(\psi^\ast \gamma)(\lambda^{\sharp_g})\\
\nonumber
&=\mathcal{R}^g(\psi^\ast \gamma)\left(\psi (\psi^\ast\beta)^{\sharp_g}\right)\\
\nonumber
&=\psi^\ast\left[\mathcal{R}^g(\psi^\ast \gamma)\right]\left((\psi^\ast\beta)^{\sharp_g}\right)\\
\label{eqCO3}
&=g^{-1}(\psi^\ast\beta,\psi^\ast\left[\mathcal{R}^g(\psi^\ast \gamma)\right]).
\end{align}
On the other hand, for $\alpha,\eta\in \wedge^kT^\ast_pM$, (\ref{eqSharpGH}) implies 
\begin{align}
\nonumber
h^{-1}(\alpha,\eta)&=h(\alpha^{\sharp_h},\eta^{\sharp_h})\\
\nonumber
&=h(\psi(\psi^\ast\alpha)^{\sharp_g},\psi(\psi^\ast\eta)^{\sharp_g})\\
\nonumber
&=g((\psi^\ast\alpha)^{\sharp_g},(\psi^\ast\eta)^{\sharp_g})\\
\label{eqCO4}
&=g^{-1}(\psi^\ast\alpha,\psi^\ast\eta).
\end{align}
Hence,
\begin{equation}
\label{eqCO5}
h^{-1}(\cdot,\cdot)=g^{-1}(\psi^\ast\cdot,\psi^\ast\cdot).
\end{equation}
Applying (\ref{eqCO5}) to (\ref{eqCO3}) gives
\begin{equation}
\label{eqCO6}
g^{-1}(\psi^\ast\beta,\psi^\ast\mathcal{R}^h(\gamma))=g^{-1}(\psi^\ast\beta,\psi^\ast\left[\mathcal{R}^g(\psi^\ast \gamma)\right]).
\end{equation}
Comparing both sides of (\ref{eqCO6}) and using the fact that $\psi$ is invertible, we conclude 
$$
\mathcal{R}^h(\gamma)=\mathcal{R}^g(\psi^\ast \gamma)
$$
which completes the proof.
\end{proof}
\noindent Proposition \ref{propCurvatureOperator} is essential in establishing that every $g$-Codazzi map on the round sphere $(S^n,g)$ for $n\ge 3$ is self-adjoint (see Theorem \ref{thmgCodazziSnSA}). The latter will, in turn, play an important role in the proof of  Theorem \ref{thmS6Twist}. We conclude this section with the following result and a simple application.
\begin{proposition}
\label{propOmegaPsi}
Let $(M,g,J,\omega)$ be an almost Hermitian manifold and let $\psi\in \mbox{Aut}(TM)$ be a $g$-Codazzi map.  Then
\begin{align*}
(d\omega^\psi)(X,Y,Z)&=(\nabla^g_X\omega)^\psi(Y,Z)+(\nabla^g_Y\omega)^\psi(Z,X)+(\nabla^g_Z\omega)^\psi(X,Y),
\end{align*}
where $(\nabla^g_X\omega)^\psi(Y,Z):=(\nabla^g_X\omega)(\psi^{-1}Y,\psi^{-1}Z)$.  Moreover, if $(M,g,J,\omega)$ is K\"{a}hler, then $(M,g^\psi,J^\psi,\omega^\psi)$ is also K\"{a}hler.
\end{proposition}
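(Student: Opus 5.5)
The plan is to express $d\omega^\psi$ through the Levi-Civita connection of $h:=g^\psi$ and then use the transformation law (\ref{eqLCB}) from Proposition \ref{propLeviCivita}. Since $\nabla^h$ is torsion free, for any 2-form $\alpha$ we have
\begin{equation*}
(d\alpha)(X,Y,Z)=(\nabla^h_X\alpha)(Y,Z)+(\nabla^h_Y\alpha)(Z,X)+(\nabla^h_Z\alpha)(X,Y).
\end{equation*}
This is the standard expression of the exterior derivative via any torsion-free connection, with the paper's normalization of $d$; it follows by expanding both sides and cancelling the bracket terms using $[X,Y]=\nabla^h_XY-\nabla^h_YX$. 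I will apply it with $\alpha=\omega^\psi$.

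The key step is computing $\nabla^h_X\omega^\psi$. By definition,
\begin{equation*}
(\nabla^h_X\omega^\psi)(Y,Z)=X\bigl(\omega(\psi^{-1}Y,\psi^{-1}Z)\bigr)-\omega(\psi^{-1}\nabla^h_XY,\psi^{-1}Z)-\omega(\psi^{-1}Y,\psi^{-1}\nabla^h_XZ).
\end{equation*}
Because $\psi$ is $g$-Codazzi, (\ref{eqLCB}) gives $\psi^{-1}\nabla^h_XY=\nabla^g_X(\psi^{-1}Y)$, and likewise for $Z$. Hence the right-hand side equals
\begin{equation*}
X\bigl(\omega(\psi^{-1}Y,\psi^{-1}Z)\bigr)-\omega(\nabla^g_X(\psi^{-1}Y),\psi^{-1}Z)-\omega(\psi^{-1}Y,\nabla^g_X(\psi^{-1}Z)),
\end{equation*}
which is $(\nabla^g_X\omega)(\psi^{-1}Y,\psi^{-1}Z)=(\nabla^g_X\omega)^\psi(Y,Z)$. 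Substituting this into the cyclic formula yields the stated identity. The only place where care is needed is checking the sign and normalization conventions of $d$ against the torsion-free formula; everything else is mechanical. The $g$-Codazzi hypothesis enters exactly once, through (\ref{eqLCB}).

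For the K\"ahler statement, I will use the same computation in a stronger form. The argument above actually shows $\nabla^h\omega^\psi=(\nabla^g\omega)^\psi$, so if $\nabla^g\omega=0$ then $\nabla^h\omega^\psi=0$. I will also check $J^\psi$ directly:
\begin{equation*}
\nabla^h_X(J^\psi Y)=\psi\nabla^g_X(J\psi^{-1}Y)=\psi J\nabla^g_X(\psi^{-1}Y)=J^\psi\nabla^h_XY,
\end{equation*}
using $\nabla^gJ=0$ in the middle step. Thus $\nabla^hJ^\psi=0$. Since $(g^\psi,J^\psi,\omega^\psi)$ is already almost Hermitian by Definition \ref{defPsiTwist}, parallelism of $J^\psi$ with respect to the Levi-Civita connection of $g^\psi$ implies that $J^\psi$ is integrable and $d\omega^\psi=0$. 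Hence the twisted structure is K\"ahler.
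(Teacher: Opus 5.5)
Your proof is correct, but for the main identity it takes a different route from the paper. The paper computes $d\omega^\psi$ with the fixed torsion-free connection $\nabla^g$. Expanding $(\nabla^g_X\omega^\psi)(Y,Z)$ there produces, besides $(\nabla^g_X\omega)^\psi(Y,Z)$, the extra terms $\omega((\nabla^g_X\psi^{-1})Y,\psi^{-1}Z)+\omega(\psi^{-1}Y,(\nabla^g_X\psi^{-1})Z)$. These cancel in pairs in the cyclic sum by the Codazzi symmetry and the skew-symmetry of $\omega$. The transformation law (\ref{eqLCB}) is used only in the K\"ahler part.

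You instead use the torsion-free connection $\nabla^{g^\psi}$ and invoke (\ref{eqLCB}) to obtain the stronger identity $(\nabla^{g^\psi}_X\omega^\psi)(Y,Z)=(\nabla^g_X\omega)^\psi(Y,Z)$ before any cyclic summation. The formula for $d\omega^\psi$ then follows immediately, with no cancellation needed. By contrast, in the paper's computation $\nabla^g\omega^\psi$ and $(\nabla^g\omega)^\psi$ genuinely differ, and only their cyclic sums coincide.

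What your route buys is a tensorial statement from which the K\"ahler claim also falls out. Since $\omega^\psi=g^\psi(J^\psi\cdot,\cdot)$ and $\nabla^{g^\psi}g^\psi=0$, the condition $\nabla^{g^\psi}\omega^\psi=0$ is already equivalent to $\nabla^{g^\psi}J^\psi=0$. So your separate check on $J^\psi$, which is exactly the paper's argument, is redundant but harmless. What the paper's route buys is that the first statement uses the Codazzi condition directly, without relying on the Koszul-formula computation of Proposition \ref{propLeviCivita}.
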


\begin{proof}
Let $\beta:=\omega^\psi$.  Since $\nabla^g$ is torsion free, we may express $d\beta$ as 
\begin{equation}
\label{eqOmegaPsi1}
(d\beta)(X,Y,Z)=(\nabla^g_X\beta)(Y,Z)+(\nabla^g_Y\beta)(Z,X)+(\nabla^g_Z\beta)(X,Y). 
\end{equation}
Expanding $(\nabla^g_X\beta)(Y,Z)$ gives
\begin{align}
\nonumber
(\nabla^g_X&\beta)(Y,Z)=X(\beta(Y,Z))-\beta(\nabla^g_XY,Z)-\beta(Y,\nabla^g_XZ)\\
\nonumber
&=X(\omega(\psi^{-1}Y,\psi^{-1}Z))-\omega(\psi^{-1}\nabla^g_XY,\psi^{-1}Z)-\omega(\psi^{-1}Y,\psi^{-1}\nabla^g_XZ)\\
\nonumber
&=(\nabla^g_X\omega)(\psi^{-1}Y,\psi^{-1}Z)+\omega((\nabla^g_X\psi^{-1})Y,\psi^{-1}Z)\\
\nonumber
&+\omega(\psi^{-1}\nabla^g_XY,\psi^{-1}Z)+\omega(\psi^{-1}Y,(\nabla^g_X\psi^{-1})Z)\\
\nonumber
&+\omega(\psi^{-1}Y,\psi^{-1}\nabla^g_XZ)-\omega(\psi^{-1}\nabla^g_XY,\psi^{-1}Z)\\
\nonumber
&-\omega(\psi^{-1}Y,\psi^{-1}\nabla^g_XZ)\\
\nonumber
&=(\nabla^g_X\omega)(\psi^{-1}Y,\psi^{-1}Z)+\omega((\nabla^g_X\psi^{-1})Y,\psi^{-1}Z)\\
\label{eqOmegaPsi2}
&+\omega(\psi^{-1}Y,(\nabla^g_X\psi^{-1})Z).
\end{align}
Similarly,
\begin{align}
\nonumber
(\nabla^g_Y\beta)(Z,X)&=(\nabla^g_Y\omega)(\psi^{-1}Z,\psi^{-1}X)+\omega((\nabla^g_Y\psi^{-1})Z,\psi^{-1}X)\\
\label{eqOmegaPsi3}
&+\omega(\psi^{-1}Z,(\nabla^g_Y\psi^{-1})X)
\end{align}
and
\begin{align}
\nonumber
(\nabla^g_Z\beta)(X,Y)&=(\nabla^g_Z\omega)(\psi^{-1}X,\psi^{-1}Y)+\omega((\nabla^g_Z\psi^{-1})X,\psi^{-1}Y)\\
\label{eqOmegaPsi4}
&+\omega(\psi^{-1}X,(\nabla^g_Z\psi^{-1})Y).
\end{align}
Substituting (\ref{eqOmegaPsi2})-(\ref{eqOmegaPsi4}) into (\ref{eqOmegaPsi1}) and applying the fact that $\psi$ is a $g$-Codazzi map and $\omega$ is skew-symmetric gives
\begin{align*}
(d\beta)(X,Y,Z)&=(\nabla^g_X\omega)(\psi^{-1}Y,\psi^{-1}Z)+(\nabla^g_Y\omega)(\psi^{-1}Z,\psi^{-1}X)\\
&+(\nabla^g_Z\omega)(\psi^{-1}X,\psi^{-1}Y).
\end{align*}
Lastly, recall that $(M,g,J,\omega)$ is K\"{a}hler if and only if $\nabla^g J=0$.  Let $h:=g^\psi$ and $I:=J^\psi$.  Proposition \ref{propLeviCivita} now implies 
\begin{align*}
(\nabla^h_X I)Y&=\nabla^h_X(I Y)-I\nabla^h_X Y\\
&=\psi\nabla^g_X(J\psi^{-1}Y)-\psi J\nabla^g_X(\psi^{-1}Y)\\
&=\psi\left(\nabla^g_XJ\right)(\psi^{-1}Y)\\
&=0.
\end{align*}
\end{proof}

\begin{example}
Let $M=\mathbb{C}^2=\mathbb{R}^4$ and let $(g,J,\omega)$ denote the natural K\"{a}hler structure on $M$.  Let $(x_1,x_2,x_3,x_4)$ be the natural coordinates on $M$.  Explicitly, $g$, $J$, and $\omega$ are given by
$$
g=\sum_{i=1}^4dx_i\otimes dx_i,\hspace*{0.1in}\omega = dx_1\wedge dx_2+dx_3\wedge dx_4,
$$
$$
J\partial_1=\partial_2,~J\partial_2=-\partial_1,~J\partial_3=\partial_4,~J\partial_4=-\partial_3.
$$
Now let $f=\sin(x_1)\sin(x_3)$.  The associated Codazzi tensor is then
$$
A_{f,c}=\mbox{Hess}_gf+cg.
$$
Let $\psi\in \mathrm{Aut}(TM)$ be the $g$-Codazzi map defined by
$$
A_{f,c}(X,Y)=g(\psi^{-1}X,Y).
$$
With $\varphi: TM\rightarrow TM$ defined by $(\mathrm{Hess}_gf)(X,Y)=g(\varphi X,Y)$, we have
$$
\psi^{-1}=\varphi+c\cdot id.
$$
The matrix representation of $\psi^{-1}$ with respect to the global frame $(\partial_1,\partial_2,\partial_3,\partial_4)$ is given by
$$
[\psi^{-1}]=\begin{pmatrix}
\alpha & 0 & \beta & 0\\
0 & c & 0 & 0\\
\beta & 0 & \alpha & 0\\
0 & 0 & 0 & c 
\end{pmatrix}
$$
where 
$$
\alpha:=c-\sin(x_1)\sin(x_3),\hspace*{0.2in}\beta:=\cos(x_1)\cos(x_3)
$$
and we understand $\psi^{-1}\partial_j=\sum_i[\psi^{-1}]_{ij}\partial_i$.   We impose the condition $c>1$ to ensure that $\psi$ is  invertible.  The matrix representation of $J^\psi:=\psi J\psi^{-1}$ is then
$$
[J^\psi]=\begin{pmatrix}
0 & -\frac{c\alpha}{D} & 0 & \frac{c\beta}{D}\\
\frac{\alpha}{c} & 0 & \frac{\beta}{c} & 0\\
0 & \frac{c\beta}{D} & 0 & -\frac{c\alpha}{D}\\
\frac{\beta}{c} & 0 & \frac{\alpha}{c} & 0
\end{pmatrix}
$$
where $D:=\alpha^2-\beta^2$.  The $\psi$-twisted metric is 
\begin{align*}
g^\psi&=(\alpha^2+\beta^2)(dx_1\otimes dx_1+dx_3\otimes dx_3)+2\alpha\beta(dx_1\otimes dx_3+dx_3\otimes dx_1)\\
&+c^2(dx_2\otimes dx_2+dx_4\otimes dx_4).
\end{align*}
The associated fundamental form is 
\begin{align*}
\omega^\psi&=c\alpha(dx_1\wedge dx_2+dx_3\wedge dx_4)+c\beta(dx_1\wedge dx_4-dx_2\wedge dx_3).
\end{align*}
By Proposition \ref{propOmegaPsi}, the $\psi$-twisted manifold $(\mathbb{R}^4,g^\psi,J^\psi,\omega^\psi)$ is still a K\"{a}hler manifold.   Moreover, by Proposition \ref{propCurvatureEndomorphism}, $g^\psi$ is a flat metric.
\end{example}

\section{$g$-Codazzi maps on the $n$-sphere}
\label{secgCodazziSn}
\noindent In this section, we study $g$-Codazzi maps on the round $n$-sphere where we ultimately show that every $g$-Codazzi map must be self-adjoint for $n\ge 3$.  We begin with the following general result:
\begin{proposition}
\label{propEtaA1}
Let $(M,g)$ be a Riemannian manifold and let $\psi$ be a $g$-Codazzi map.  Define $\eta(X,Y):=g(\psi^{-1} X,Y)$ and define $\eta_a\in \Omega^2(M)$ by
$$
\eta_a(X,Y):=\frac{1}{2}(\eta(X,Y)-\eta(Y,X)).
$$
Then $\eta_a$ is closed.
\end{proposition}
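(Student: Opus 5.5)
Write $B:=\psi^{-1}$, so that $\eta(X,Y)=g(BX,Y)$ and, by Definition \ref{defgCodazzi}, $(\nabla^g_XB)Y=(\nabla^g_YB)X$. The plan is to compute $d\eta_a$ with the torsion-free formula already used in the proof of Proposition \ref{propOmegaPsi}:
$$
(d\eta_a)(X,Y,Z)=(\nabla^g_X\eta_a)(Y,Z)+(\nabla^g_Y\eta_a)(Z,X)+(\nabla^g_Z\eta_a)(X,Y).
$$
We then express everything through $\nabla^g B$. Since $g$ is parallel, we have $(\nabla^g_X\eta)(Y,Z)=g((\nabla^g_XB)Y,Z)$. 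Hence
$$
2(\nabla^g_X\eta_a)(Y,Z)=g((\nabla^g_XB)Y,Z)-g((\nabla^g_XB)Z,Y).
$$

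Next we sum cyclically over $(X,Y,Z)$. Introduce the trilinear form $T(X,Y,Z):=g((\nabla^g_XB)Y,Z)$. The Codazzi condition says precisely that $T$ is symmetric in its first two slots. The cyclic sum gives
$$
2\,d\eta_a(X,Y,Z)=T(X,Y,Z)-T(X,Z,Y)+T(Y,Z,X)-T(Y,X,Z)+T(Z,X,Y)-T(Z,Y,X).
$$
Using $T(X,Y,Z)=T(Y,X,Z)$, the first term cancels the fourth. Likewise $T(Y,Z,X)=T(Z,Y,X)$ cancels the third against the sixth, and $T(Z,X,Y)=T(X,Z,Y)$ cancels the fifth against the second. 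Thus $d\eta_a=0$.

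No step is a real obstacle; the main care is bookkeeping. First, one must check that $\nabla^g\eta$ is computed correctly via metric compatibility, which gives $(\nabla^g_X\eta)(Y,Z)=g((\nabla^g_XB)Y,Z)$ with no extra terms. Second, one must pair the six terms correctly. Note that no self-adjointness of $\psi$ is needed. The symmetric part of $\eta$ is not separately closed, and this is irrelevant here.
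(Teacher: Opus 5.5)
Your proposal is correct and follows the paper's proof. Both compute $(\nabla^g_X\eta)(Y,Z)=g((\nabla^g_X\psi^{-1})Y,Z)$ by metric compatibility, antisymmetrize, and cancel the six terms of the cyclic sum in pairs using $(\nabla^g_X\psi^{-1})Y=(\nabla^g_Y\psi^{-1})X$; your version simply spells out the pairing that the paper leaves as ``one easily finds.''
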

\begin{proof}
By direct calculation, one easily finds that 
$$
(\nabla^g_X\eta)(Y,Z)=g((\nabla^g_X\psi^{-1})Y,Z).
$$
From this, we have
$$
(\nabla^g_X\eta_a)(Y,Z)=\frac{1}{2}(g((\nabla^g_X\psi^{-1})Y,Z)-g((\nabla^g_X\psi^{-1})Z,Y)).
$$
Using the above expression together with the condition 
$$
(\nabla^g_X\psi^{-1})Y=(\nabla^g_Y\psi^{-1})X,
$$
one easily finds
$$
(d\eta_a)(X,Y,Z)=(\nabla^g_X\eta_a)(Y,Z)+(\nabla^g_Y\eta_a)(Z,X)+(\nabla^g_Z\eta_a)(X,Y)=0.
$$
\end{proof}
Proposition \ref{propEtaA1} shows that the 2-form $\eta_a$ associated to a $g$-Codazzi map is necessarily closed.  However, there does not appear to be any reason why $\eta_a$ must be coclosed.  As a curiosity, we express the condition that $\eta_a$ is coclosed in terms of its associated $g$-Codazzi map.
\begin{proposition}
\label{propEtaA3}
Let $(M,g)$ be an oriented Riemannian manifold and let $\delta:\Omega^{k+1}(M)\rightarrow \Omega^k(M)$ be the codifferential.  Suppose $\psi$ is a $g$-Codazzi map and let $\eta_a\in \Omega^2(M)$ be defined as in Proposition \ref{propEtaA1}.  Then $\delta\eta_a=0$ if and only if for any local orthonormal frame $\{e_1,\dots, e_n\}$ and vector field $X$ on $M$, the following condition is satisfied:
$$
\mathrm{Tr}(\nabla^g_X\psi^{-1})=\sum_jg((\nabla^g_{e_j}\psi^{-1})e_j,X),
$$ 
where both sides are evaluated over the domain of $\{e_1,\dots,e_n\}$.
\end{proposition}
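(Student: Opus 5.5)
We will compute $\delta\eta_a$ directly in a local orthonormal frame, using the standard formula for the codifferential of a $2$-form in terms of the Levi-Civita connection:
$$
(\delta\eta_a)(X)=-\sum_j(\nabla^g_{e_j}\eta_a)(e_j,X).
$$
From the proof of Proposition \ref{propEtaA1} we already have
$$
(\nabla^g_Z\eta_a)(Y,W)=\tfrac12\big(g((\nabla^g_Z\psi^{-1})Y,W)-g((\nabla^g_Z\psi^{-1})W,Y)\big).
$$
Substituting $Z=Y=e_j$ and $W=X$ and summing, we get
$$
(\delta\eta_a)(X)=-\tfrac12\sum_j g((\nabla^g_{e_j}\psi^{-1})e_j,X)+\tfrac12\sum_j g((\nabla^g_{e_j}\psi^{-1})X,e_j).
$$

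The key step is to rewrite the second sum using the $g$-Codazzi condition $(\nabla^g_{e_j}\psi^{-1})X=(\nabla^g_X\psi^{-1})e_j$. This turns it into
$$
\sum_j g((\nabla^g_X\psi^{-1})e_j,e_j)=\mathrm{Tr}(\nabla^g_X\psi^{-1}),
$$
since the trace of an endomorphism can be computed in any orthonormal frame. Hence
$$
(\delta\eta_a)(X)=\tfrac12\Big(\mathrm{Tr}(\nabla^g_X\psi^{-1})-\sum_j g((\nabla^g_{e_j}\psi^{-1})e_j,X)\Big)
$$
on the domain of the frame.

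Therefore $\delta\eta_a=0$ if and only if the bracket vanishes for every local orthonormal frame and every $X$. Both directions of the stated equivalence follow immediately: $\delta\eta_a$ is a global $1$-form, so it vanishes if and only if it vanishes on each frame domain. The expression $\sum_j(\nabla^g_{e_j}\psi^{-1})e_j$ is frame-independent, being a contraction of the tensor $\nabla^g\psi^{-1}$, so no consistency issue arises.

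The only point requiring care is the sign and normalization of $\delta$. Any convention, for instance $\delta=\pm\ast d\ast$ with the dimension-dependent sign, yields $\delta\beta=-\sum_j\iota_{e_j}\nabla^g_{e_j}\beta$ up to an overall nonzero constant, which is irrelevant for the vanishing condition. I would briefly justify this formula by recalling that for a torsion-free metric connection $\delta$ is minus the contraction of $\nabla^g$; this is the step I would verify most carefully, but it is standard.
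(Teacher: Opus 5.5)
Your proposal is correct and follows the same route as the paper's proof. The paper also starts from $(\delta\eta_a)(X)=-\sum_j(\nabla^g_{e_j}\eta_a)(e_j,X)$ and substitutes the formula for $\nabla^g\eta_a$ from the proof of Proposition \ref{propEtaA1}; it then uses the $g$-Codazzi condition to turn $\sum_j g((\nabla^g_{e_j}\psi^{-1})X,e_j)$ into $\mathrm{Tr}(\nabla^g_X\psi^{-1})$, which gives exactly your expression $(\delta\eta_a)(X)=\tfrac12\big(\mathrm{Tr}(\nabla^g_X\psi^{-1})-\sum_j g((\nabla^g_{e_j}\psi^{-1})e_j,X)\big)$.
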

\begin{proof}
From the proof of Proposition \ref{propEtaA1}, we have
$$
(\nabla^g_X\eta_a)(Y,Z)=\frac{1}{2}(g((\nabla^g_X\psi^{-1})Y,Z)-g((\nabla^g_X\psi^{-1})Z,Y)).
$$
By direct calculation, we have
\begin{align*}
(\delta\eta_a)(X)&=-\sum_j(\iota_{e_j} (\nabla^g_{e_j}\eta_a)(X)\\
&=-\sum_j(\nabla^g_{e_j}\eta_a)(e_j,X)\\
&=-\frac{1}{2}\sum_j(g((\nabla^g_{e_j}\psi^{-1})e_j,X)+\frac{1}{2}\sum_j g((\nabla^g_{e_j}\psi^{-1})(X,e_j))\\
&=-\frac{1}{2}\sum_j(g((\nabla^g_{e_j}\psi^{-1})e_j,X)+\frac{1}{2}\sum_j g((\nabla^g_{X}\psi^{-1})e_j,e_j))\\
&=-\frac{1}{2}\sum_j(g((\nabla^g_{e_j}\psi^{-1})e_j,X)+\frac{1}{2}\mathrm{Tr}(\nabla^g_X\psi^{-1}).
\end{align*}
Proposition \ref{propEtaA3} now follows immediately from this.
\end{proof}
\noindent We now turn our attention to the round sphere $S^n$.  For the rest of this section, we let $g$ denote the round metric on $S^n$ unless stated otherwise.  The next result rules out the possibility that the round sphere $S^n$ admits any skew-adjoint $g$-Codazzi maps for $n\ge 3$.
\begin{proposition}
\label{propEtaA2}
The round sphere $(S^n,g)$ for $n\ge 3$ admits no $g$-Codazzi maps which are skew-adjoint.
\end{proposition}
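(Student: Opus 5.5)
The plan is to exploit Proposition \ref{propEtaA1} together with the topology of $S^n$. Suppose, for contradiction, that $\psi\in\mathrm{Aut}(TS^n)$ is a $g$-Codazzi map with $\psi^\dagger=-\psi$.

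\emph{Step 1: $\eta$ is a nondegenerate $2$-form.} Since $\psi$ is skew-adjoint, so is $\psi^{-1}$. Indeed, $(\psi^{-1})^\dagger=(\psi^\dagger)^{-1}=(-\psi)^{-1}=-\psi^{-1}$. Hence the bilinear form $\eta(X,Y):=g(\psi^{-1}X,Y)$ of Proposition \ref{propEtaA1} is skew-symmetric, so $\eta=\eta_a$. Because $\psi^{-1}$ is invertible, $\eta$ is nondegenerate at every point.

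\emph{Step 2: parity of $n$.} A skew-symmetric endomorphism of a real inner product space of odd dimension has determinant zero, since $\det(\psi^{-1})=\det(-\psi^{-1})=(-1)^n\det(\psi^{-1})$. So if $n$ is odd, no such $\psi$ exists and we are done. We may therefore assume $n=2m$ with $m\ge 2$.

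\emph{Step 3: closedness.} By Proposition \ref{propEtaA1}, $\eta=\eta_a$ is closed. Thus $\eta$ is a symplectic form on $S^{2m}$.

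\emph{Step 4: topological obstruction.} Since $n\ge 3$, we have $H^2_{dR}(S^n)=0$, so $\eta=d\alpha$ for some $1$-form $\alpha$. Then
$$\eta^m=d\bigl(\alpha\wedge\eta^{m-1}\bigr)$$
is exact, and by Stokes' theorem on the closed manifold $S^{2m}$ we get $\int_{S^{2m}}\eta^m=0$. On the other hand, nondegeneracy of $\eta$ makes $\eta^m$ a nowhere-vanishing top form. With respect to the orientation it determines, its integral is strictly positive, which is a contradiction.

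I expect no real obstacle. The only point needing care is the elementary fact that nondegeneracy of $\eta$ pointwise forces $\eta^m\neq 0$ everywhere, which follows from the Darboux normal form of a nondegenerate skew form at a point. Note also that the hypothesis $n\ge 3$ is used exactly once, to ensure $H^2(S^n)=0$. This is consistent with the statement: on $S^2$ the area form is closed and nondegenerate, and indeed $\psi=J$ is a skew-adjoint $g$-Codazzi map there by Example \ref{exCodazziCounter}.
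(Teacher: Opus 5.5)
Your proof is correct, but it takes a different route from the paper's.

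\textbf{The paper's route.} The paper does not use nondegeneracy of $\eta$ or the topology of $S^n$. It notes that for skew $\eta$ the Codazzi condition makes $\nabla^g\eta$ symmetric in its first two slots, which gives $d\eta=-\nabla^g\eta$. Closedness (Proposition~\ref{propEtaA1}) then forces $\eta$ to be parallel. Since the round $S^n$ with $n\ge 3$ has no nonzero parallel $2$-forms (a holonomy fact cited from Kobayashi--Nomizu), $\eta=0$, which contradicts invertibility.

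\textbf{Your route.} You combine closedness with nondegeneracy instead. A determinant parity argument disposes of odd $n$. For $n=2m\ge 4$, $\eta$ would be a symplectic form on $S^{2m}$, which is impossible: $H^2_{dR}(S^{2m})=0$ makes $\eta^m$ exact, and Stokes' theorem then gives $\int_{S^{2m}}\eta^m=0$.

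\textbf{What each buys.}
\begin{itemize}
\item Your argument uses nothing about the round metric beyond Proposition~\ref{propEtaA1}, which holds for every Riemannian metric. It therefore shows that no closed manifold with $H^2_{dR}=0$ admits a skew-adjoint $g$-Codazzi map for any metric $g$. It also avoids the holonomy input.
\item The paper's argument is local and never uses invertibility. It shows that any skew-adjoint endomorphism satisfying the affine Codazzi condition on round $S^n$, $n\ge 3$, vanishes identically. It also works on noncompact manifolds without parallel $2$-forms, and it treats all $n\ge 3$ uniformly with no parity split.
\end{itemize}

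Your closing remark about $S^2$ is right. Note, though, that Example~\ref{exCodazziCounter} is stated for $a,b\neq 0$; the case $\psi=J$ follows by the same one-line argument, since $\nabla^g J=0$.
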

\begin{proof}
Suppose $\psi$ is a nonzero $g$-Codazzi map which is skew-adjoint.  Then $\psi^{-1}$ is also skew-adjoint.   Let $\eta(X,Y):=g(\psi^{-1} X,Y)$.  Since $\psi^{-1}$ is skew-adjoint, it follows immediately that $\eta$ is a 2-form and $\eta_a=\eta$, where $\eta_a$ is defined as in Proposition \ref{propEtaA1}.  From the proof of Proposition \ref{propEtaA1}, we have $(\nabla^g_X\eta)(Y,Z)=g((\nabla^g_X\psi^{-1})Y,Z)$.  However, since $\psi$ is a $g$-Codazzi map, we also have
$$
(\nabla^g_X\eta)(Y,Z)=g((\nabla^g_Y\psi^{-1})X,Z)=(\nabla^g_Y\eta)(X,Z).
$$
Expressing $d\eta$ in terms of $\nabla^g$ gives
\begin{align*}
(d\eta)(X,Y,Z)&=(\nabla^g_X\eta)(Y,Z)+(\nabla^g_Y\eta)(Z,X)+(\nabla^g_Z\eta)(X,Y)\\
&=(\nabla^g_X\eta)(Y,Z)-(\nabla^g_Y\eta)(X,Z)+(\nabla^g_X\eta)(Z,Y)\\
&=(\nabla^g_X\eta)(Y,Z)-(\nabla^g_X\eta)(Y,Z)-(\nabla^g_X\eta)(Y,Z)\\
&=-(\nabla^g_X\eta)(Y,Z).
\end{align*}
Since $\eta=\eta_a$ is closed by Proposition \ref{propEtaA1}, we see that $\eta$ is a parallel 2-form on $S^n$.  However, on the round sphere $S^n$, every parallel $k$-form for $0<k<n$ is precisely zero (see e.g., Ch. X of \cite{KobNomII}).  Since $n\ge 3$ and $\eta$ is a 2-form, we have $\eta=0$. This implies that $\psi^{-1}=0$.  However, this is a contradiction since, in particular, $\psi$ is an element of $\mathrm{Aut}(TS^n)$.
\end{proof}
\noindent Now we need a technical lemma which is pure linear algebra.
\begin{lemma}
\label{lemSA}
Let $V$ be a finite dimensional real vector space of dimension $n\ge 3$ with inner product $\mu$ and let $F: V\rightarrow V$ be a vector space isomorphism with the property that $F^\ast: \wedge^2 V^\ast\rightarrow \wedge^2 V^\ast$ is self-adjoint with respect to $\mu^{-1}$.  Then $F$ is either self-adjoint or skew-adjoint with respect to $\mu$.
\end{lemma}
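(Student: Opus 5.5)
Let $F^\dagger$ denote the $\mu$-adjoint of $F$. The plan is to translate the hypothesis into a statement about $F$ and $F^\dagger$ acting on $\wedge^2 V$, and then show that this forces $F^\dagger = \pm F$.

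\emph{Step 1: compute the adjoint of $F^\ast$.} On decomposables, with $\mu^{-1}$ the Gram-determinant inner product, we have
$$
\mu^{-1}(F^\ast(\alpha_1\wedge\alpha_2),\beta_1\wedge\beta_2)=\det\bigl(\mu^{-1}(F^\ast\alpha_i,\beta_j)\bigr)=\det\bigl(\mu^{-1}(\alpha_i,(F^\dagger)^\ast\beta_j)\bigr).
$$
Hence the $\mu^{-1}$-adjoint of $F^\ast$ on $\wedge^2V^\ast$ is $(F^\dagger)^\ast$. Self-adjointness therefore means $\wedge^2 F^\ast=\wedge^2 (F^\dagger)^\ast$. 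Dualizing (identifying $V\cong V^\ast$ via $\mu$), this is equivalent to
$$
\wedge^2 F=\wedge^2 G \quad\text{on } \wedge^2 V, \qquad G:=F^\dagger .
$$

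\emph{Step 2: show that $\wedge^2 F=\wedge^2 G$ with $F,G$ invertible and $n\ge3$ forces $G=\pm F$.} Set $H:=G^{-1}F$, which is invertible with $\wedge^2 H=\mathrm{id}$. Then for all $u,v$ we have $Hu\wedge Hv=u\wedge v$, so $H$ preserves every $2$-plane: for $u,v$ independent, $Hu\in\mathrm{span}(u,v)$. Since $n\ge 3$, choose $w$ independent of $u,v$. Then
$$
Hu\in \mathrm{span}(u,v)\cap\mathrm{span}(u,w)=\mathrm{span}(u),
$$
so every vector is an eigenvector of $H$. Hence $H=c\,\mathrm{id}$, and $\wedge^2H=c^2\,\mathrm{id}=\mathrm{id}$ gives $c=\pm1$. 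Thus $F=\pm G=\pm F^\dagger$, i.e.\ $F$ is self-adjoint or skew-adjoint.

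\emph{Main obstacle.} The only real content is Step 2, specifically using $n\ge3$ to pass from ``$H$ preserves all $2$-planes'' to ``$H$ is scalar.'' For $n=2$ this fails, since $\wedge^2 H=\det H$ and any $H$ with determinant $1$ works. The second potential pitfall is the adjoint bookkeeping in Step 1: I will check on an orthonormal basis that $(F^\ast)^{\text{adj}}=(F^\dagger)^\ast$ with the same convention the paper uses for $\mu^{-1}$. The determinant formula makes this routine.
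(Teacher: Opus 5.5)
Your proposal is correct and follows the same outline as the paper: the paper also turns self-adjointness of $F^\ast$ into $\wedge^2F=\wedge^2F^\dagger$ (via $(F_2)^\dagger=(F^\dagger)_2$), and then shows that $H=F^TF^{-1}$, which satisfies $\wedge^2H=\mathrm{id}$, must equal $\pm\mathrm{id}$. The only difference is in that last step: the paper uses the $2\times 2$ minors of $H$ in an orthonormal basis to show $H$ is diagonal with $H_{ii}H_{jj}=1$, while your plane-intersection argument (every vector is an eigenvector) reaches $H=\pm\mathrm{id}$ more directly and without coordinates.
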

\begin{proof}
Let $F^\ast: \wedge^k V^\ast\rightarrow \wedge^k V^\ast$ denote the usual pullback on $k$-forms.  Let $F_k: \wedge^k V\rightarrow \wedge^kV$ be the vector space isomorphism which is uniquely defined by
$$
F_k(v_1\wedge \cdots \wedge v_k):=(Fv_1)\wedge \cdots \wedge (Fv_k),
$$
for $v_1,\dots, v_k\in V$.  By direct calculation, one easily finds
\begin{equation}
\label{eqLemmaSA1}
F^\ast (v^{\flat})= (F^\dagger v)^{\flat},
\end{equation}
where $v\in V$, $F^\dagger$ is the adjoint of $F$, and the $\flat$-operation is defined with respect to $\mu$.  For $X\in \wedge^k V$, equation (\ref{eqLemmaSA1}) and the definition of $(F^\dagger)_k$ implies
\begin{equation}
\label{eqLemmaSA2}
F^\ast(X^\flat)=\left[ (F^\dagger)_k(X)\right]^\flat.
\end{equation}
Since $F^\ast: \wedge^2 V^\ast\rightarrow\wedge^2 V^\ast$ is self-adjoint with respect to $\mu^{-1}$, equation (\ref{eqLemmaSA2}) implies
\begin{equation}
\label{eqLemmaSA3}
\mu^{-1}\left(\left[(F^\dagger)_2(X)\right]^\flat,Y^\flat  \right)=\mu^{-1}\left( X^\flat, \left[(F^\dagger)_2(Y)\right]^\flat\right),
\end{equation}
for all $X,Y\in \wedge^2 V$.  From the definition of $\mu^{-1}$, equation (\ref{eqLemmaSA3}) can be rewritten as
\begin{equation}
\label{eqLemmaSA4}
\mu\left((F^\dagger)_2(X),Y\right)=\mu\left( X, (F^\dagger)_2(Y)\right).
\end{equation}
In other words, $(F^\dagger)_2: \wedge^2 V\rightarrow \wedge^2 V$ is self-adjoint with respect to $g$.  From the definition of $g$ applied to $\wedge^k V$, one always has the identity 
\begin{equation}
\label{eqLemmaSA4a}
(F_k)^\dagger = (F^\dagger)_k,\hspace*{0.1in}\forall~k.
\end{equation}
Equation (\ref{eqLemmaSA4a}) together with the self-adjointness of $(F^\dagger)_2$ implies that $F_2$ is also self-adjoint with respect to $\mu$:
\begin{equation}
\label{eqLemmaSA5}
(F_2)^\dagger=F_2.
\end{equation}
Now let $e_1,\dots, e_n$ be an orthonormal basis of $V$ with respect to $\mu$ and write $Fe_j=\sum_i F_{ij}e_i$.  Expanding
$$
\mu(F_2(e_i\wedge e_j),e_k\wedge e_l)=\mu(e_i\wedge e_j,F_2(e_k\wedge e_l))
$$
gives
\begin{equation}
\label{eqlemSA1}
F_{ki}F_{lj}-F_{li}F_{kj}=F_{ik}F_{jl}-F_{jk}F_{il}.
\end{equation}
For convenience, identify $F$ and $F^{-1}$ with their matrix representations with respect to $e_1,\dots, e_n$ where the $(i,j)$-element of $F^{-1}$ is denoted as $F^{ij}$.   Also, let $H=(H_{ij})=F^TF^{-1}$.  
Multiplying both sides of (\ref{eqlemSA1}) by $F^{kp}F^{lq}$ and then summing over all $k$ and $l$ gives
\begin{equation}
\label{eqlemSA2}
H_{ip}H_{jq}-H_{iq}H_{jp}=\delta_i^p\delta_j^q-\delta_j^p\delta_i^q.
\end{equation}
We can rewrite (\ref{eqlemSA2}) as 
\begin{equation}
\label{eqlemSA2a}
\det\begin{pmatrix}
H_{ip} & H_{iq}\\
H_{jp} & H_{jq}
\end{pmatrix}=\delta_i^p\delta_j^q-\delta_j^p\delta_i^q.
\end{equation}
For $i\neq j$, equation (\ref{eqlemSA2a}) gives
\begin{equation}
\label{eqlemSA3}
\det \begin{pmatrix}
H_{ii} & H_{ij}\\
H_{ji} & H_{jj}
\end{pmatrix}=1.
\end{equation}
Since $n\ge 3$, equation (\ref{eqlemSA2}) implies that for all $i,j,p$ distinct, we have 
\begin{equation}
\label{eqlemSA4}
\det \begin{pmatrix}
H_{ii} & H_{ip}\\
H_{ji} & H_{jp}
\end{pmatrix}=\det \begin{pmatrix}
H_{ip} & H_{ij}\\
H_{jp} & H_{jj}
\end{pmatrix}=0.
\end{equation}
From (\ref{eqlemSA3}), we have
$$
\begin{pmatrix}
H_{ii}\\
H_{ji}
\end{pmatrix} \neq 0,\hspace*{0.1in} \begin{pmatrix}
H_{ij}\\
H_{jj}
\end{pmatrix}\neq 0.
$$
This together with (\ref{eqlemSA4}) implies
\begin{equation}
\label{eqlemSA5}
\begin{pmatrix}
H_{ip}\\
H_{jp}
\end{pmatrix}=\lambda_1\begin{pmatrix}
H_{ii}\\
H_{ji}
\end{pmatrix}=\lambda_2\begin{pmatrix}
H_{ij}\\
H_{jj}
\end{pmatrix}
\end{equation}
for some $\lambda_1,\lambda_2\in \mathbb{R}$.  However, if $\lambda_1$ or $\lambda_2$ is nonzero, then the determinant appearing in (\ref{eqlemSA3}) would be $0$, which is a contradiction.  Hence, $\lambda_1=\lambda_2=0$.  Since $i,j,p$ are arbitrary distinct elements, it follows that $H_{ij}=0$ for all $i\neq j$.  In other words, $H$ is a diagonal matrix.  Equation (\ref{eqlemSA3}) now implies $H_{ii}H_{jj}=1$ for $i\neq j$.  From this, we have 
$$
H_{jj}=\frac{1}{H_{11}},\hspace*{0.1in} j=2,\dots, n.
$$
Now let $j,k$ be distinct with $j,k>1$ (this is possible since $n\ge 3$).  Then
$$
H_{jj}H_{kk}=\frac{1}{H_{11}^2}=1,
$$
which implies that $H_{11}=\pm 1$.  Since $H_{jj}=1/H_{11}$ for $j\ge 2$, it follows that $H=F^TF^{-1}=\pm I_n$.  Hence, $F^T=\pm F$.  Since the matrix representation of $F$ is taken with respect to an orthonormal basis $e_1,\dots,e_n$, it follows immediately that $F:V\rightarrow V$ is either self-adjoint or skew-adjoint.
\end{proof}
\noindent We now have everything we need to prove the following result:
\begin{theorem}
\label{thmgCodazziSnSA}
Let $(S^n,g)$ be the $n$-dimensional round sphere with $n\ge 3$.   Let $\psi\in \mbox{Aut}(TS^n)$ be a $g$-Codazzi map.  Then $\psi$ is self-adjoint with respect to $g$.
\end{theorem}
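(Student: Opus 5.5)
The plan is to combine Proposition \ref{propCurvatureOperator}, Lemma \ref{lemSA}, and Proposition \ref{propEtaA2}.

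First, set $h:=g^\psi$. Since $g$ is the round metric with constant sectional curvature $1$, we have $R^g(X,Y)Z=g(Y,Z)X-g(X,Z)Y$. From this, $\mathrm{Rm}^g(X\wedge Y,Z\wedge W)=g(Y,Z)g(X,W)-g(X,Z)g(Y,W)$, which is $-g(X\wedge Y,Z\wedge W)$ for the extended metric on $\wedge^2TM$. With the paper's sign convention this gives $\mathcal{R}^g=c\cdot\mathrm{id}$ on $\wedge^2T^\ast S^n$ for a nonzero constant $c$; the sign is irrelevant. Proposition \ref{propCurvatureOperator} then yields $\mathcal{R}^h=c\,\psi^\ast$. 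The curvature operator of any Riemannian metric is self-adjoint with respect to the induced metric on $2$-forms, so $\psi^\ast$ is self-adjoint with respect to $h^{-1}$. By (\ref{eqCO5}), $h^{-1}(\alpha,\eta)=g^{-1}(\psi^\ast\alpha,\psi^\ast\eta)$. Self-adjointness therefore reads
$$g^{-1}(\psi^\ast\psi^\ast\beta,\psi^\ast\gamma)=g^{-1}(\psi^\ast\beta,\psi^\ast\psi^\ast\gamma)$$
for all $\beta,\gamma$. Since $\psi^\ast$ is invertible, substituting $\beta'=\psi^\ast\beta$ and $\gamma'=\psi^\ast\gamma$ shows that $\psi^\ast$ is self-adjoint with respect to $g^{-1}$ on $\wedge^2T_p^\ast S^n$ at each point $p$.

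Second, apply Lemma \ref{lemSA} pointwise, with $V=T_pS^n$, $\mu=g_p$ and $F=\psi_p$, which is possible since $n\ge 3$. It shows that at each $p$, $\psi_p$ is either self-adjoint or skew-adjoint.

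The main obstacle is passing from this pointwise dichotomy to a global statement. I would split $S^n$ into the set $U$ where $\psi_p^\dagger=\psi_p$ and the set $W$ where $\psi_p^\dagger=-\psi_p$. Both are closed, being zero sets of the continuous maps $\psi^\dagger-\psi$ and $\psi^\dagger+\psi$. They are disjoint, since both conditions together force $\psi_p=0$, contradicting invertibility. They cover $S^n$, and $S^n$ is connected, so one of them is all of $S^n$. If $W=S^n$, then $\psi$ is a skew-adjoint $g$-Codazzi map, which Proposition \ref{propEtaA2} rules out for $n\ge3$. (For odd $n$ this case is also excluded directly, since a skew-adjoint isomorphism cannot exist in odd dimension.) Hence $U=S^n$ and $\psi$ is self-adjoint.

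Finally, I would double-check that the self-adjointness of $\mathcal{R}^h$ with respect to $h^{-1}$ holds for the twisted metric $h$ exactly as the paper noted for $g$. It does, since it follows only from the pair symmetry $\mathrm{Rm}^h(X,Y,Z,W)=\mathrm{Rm}^h(Z,W,X,Y)$, which every Levi-Civita curvature tensor satisfies.
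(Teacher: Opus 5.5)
Your proof is correct and follows the paper's argument: $\mathcal{R}^g$ is a nonzero multiple of the identity, so Proposition~\ref{propCurvatureOperator} makes $\psi^\ast$ self-adjoint on $2$-forms; Lemma~\ref{lemSA} then gives the pointwise self-/skew-adjoint dichotomy, connectedness globalizes it, and Proposition~\ref{propEtaA2} excludes the skew-adjoint case. The differences are cosmetic: you transfer self-adjointness from $h^{-1}$ to $g^{-1}$ via (\ref{eqCO5}) before applying the lemma with $\mu=g$, whereas the paper applies it with $\mu=h$ and translates afterward; you also write out the two-disjoint-closed-sets connectedness argument that the paper leaves implicit.
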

\begin{proof}
For $(S^n,g)$, the Riemann curvature tensor is
$$
\mbox{Rm}^g(X,Y,Z,W)=g(Y,Z)g(X,W)-g(X,Z)g(Y,W).
$$
Let $e_1,\dots, e_n$ be a local orthonormal frame and let $\theta_1,\dots,\theta_n$ be the dual frame.  Then for $i<j$ and $k<l$, we have
\begin{align*}
g^{-1}\left(\theta_i\wedge \theta_j,\mathcal{R}^g(\theta_k\wedge\theta_l)\right)&=-\mbox{Rm}^g(e_i\wedge e_j,e_k\wedge e_l)=\delta_{ik}\delta_{jl}.
\end{align*}
From this, it immediately follows that $\mathcal{R}^g=\mbox{id}$ on $\wedge^2 T^\ast S^n$.  Let $h:=g^\psi$.  By Proposition \ref{propCurvatureOperator}, we have
$$
\mathcal{R}^h=\mathcal{R}^g\circ \psi^\ast=\psi^\ast,
$$
which implies that $\psi^\ast$ is self-adjoint with respect to $h^{-1}$.  Applying Lemma \ref{lemSA} (pointwise) and using the fact that $S^n$ is connected, it follows that $\psi$ is either self-adjoint or skew-adjoint with respect to $h$.  However, from the definition of $h$, this means
\begin{align*}
&h(\psi X, Y)=\pm h(X,\psi Y)\\
&g(X,\psi^{-1}Y)=\pm g(\psi^{-1}X,Y),
\end{align*}
which shows that $\psi^{-1}$ (and hence $\psi$) is either self-adjoint or skew-adjoint with respect to $g$.  Since $n\ge 3$, Proposition \ref{propEtaA2} implies that $\psi$ cannot be skew-adjoint with respect to $g$.  Hence, $\psi$ must be self-adjoint with respect to $g$.  
\end{proof}
\begin{corollary}
\label{corCodazziIdentitySn}
Let $\psi$ be any $g$-Codazzi map on the round sphere $(S^n,g)$ for $n\ge 3$.  Then for any local orthonormal frame $e_1,\dots,e_n$ and any vector field $X$ on $S^n$, $\psi$ satisfies the following condition: 
$$
\mathrm{Tr}(\nabla^g_X\psi^{-1})=\sum_jg((\nabla^g_{e_j}\psi^{-1})e_j,X),
$$ 
where both sides are evaluated over the domain of $\{e_1,\dots,e_n\}$.
\end{corollary}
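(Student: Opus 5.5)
The plan is to combine Theorem \ref{thmgCodazziSnSA} with Proposition \ref{propEtaA3}. The only input needed is that the skew part $\eta_a$ of $\eta(X,Y):=g(\psi^{-1}X,Y)$ vanishes. Once that is known, $\delta\eta_a=0$ holds trivially, and the stated trace identity follows.

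The first step is to apply Theorem \ref{thmgCodazziSnSA}. Since $n\ge 3$ and $\psi$ is a $g$-Codazzi map on the round sphere, $\psi$ is self-adjoint with respect to $g$. It follows that $\psi^{-1}$ is also self-adjoint, because $g(\psi^{-1}X,Y)=g(\psi^{-1}X,\psi\psi^{-1}Y)=g(\psi\psi^{-1}X,\psi^{-1}Y)=g(X,\psi^{-1}Y)$. Therefore $\eta$ is symmetric, and the $2$-form $\eta_a$ of Proposition \ref{propEtaA1} vanishes identically. In particular $\delta\eta_a=0$.

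The second step uses the orientability of $S^n$, so that the codifferential is defined and Proposition \ref{propEtaA3} applies. That proposition gives exactly the claimed identity for every local orthonormal frame and every vector field $X$.

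Alternatively, one can bypass Proposition \ref{propEtaA3} and verify the identity directly. Self-adjointness of $\psi^{-1}$ implies that $\nabla^g_Y\psi^{-1}$ is self-adjoint for every $Y$, since $\nabla^g$ is metric. Combining this with the Codazzi symmetry gives
$$
g((\nabla^g_{e_j}\psi^{-1})e_j,X)=g(e_j,(\nabla^g_{e_j}\psi^{-1})X)=g(e_j,(\nabla^g_X\psi^{-1})e_j).
$$
Summing over $j$ yields $\mathrm{Tr}(\nabla^g_X\psi^{-1})$.

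There is no real obstacle here. The substantive content is entirely in Theorem \ref{thmgCodazziSnSA}, and the only point needing care is the passage from self-adjointness of $\psi$ to self-adjointness of $\psi^{-1}$ and of $\nabla^g_X\psi^{-1}$.
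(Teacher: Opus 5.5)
Your proof is correct and matches the paper's argument: Theorem~\ref{thmgCodazziSnSA} gives self-adjointness of $\psi$, so $\eta_a=0$ is trivially coclosed, and Proposition~\ref{propEtaA3} then yields the identity. Your alternative direct check is also valid: it uses that $\nabla^g_X\psi^{-1}$ is self-adjoint, together with the Codazzi symmetry, and so avoids the codifferential altogether.
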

\begin{proof}
By Theorem \ref{thmgCodazziSnSA}, $\psi$ is self-adjoint with respect to $g$.  This implies that $\eta_a=0$ in Proposition \ref{propEtaA1}.  In particular, $\eta_a$ is (trivially) coclosed.  Corollary \ref{corCodazziIdentitySn} now follows from Proposition \ref{propEtaA3}.  
\end{proof}

\section{Integrability and $g$-Codazzi maps}
\label{SectionIntSixSphere}
In this section $(M,g,J,\omega)$ denotes a fixed almost Hermitian manifold until stated otherwise.   For $\psi\in \mbox{Aut}(TM)$, let
\begin{equation}
\rho^\psi(X,Y):=\psi^{-1}[\psi X,\psi Y]-[X,Y].
\end{equation}
The following result shows that the Nijhenhuis tensor of $J^\psi$ differs from that of $J$ by an expression which depends on $\rho^\psi$.  
\begin{proposition}
\label{propPsiIntegrable}
For any $\psi\in \mathrm{Aut}(TM)$, let 
$$
S^\psi(X,Y):=J\rho^\psi(JX,Y)+J\rho^\psi(X,JY)+\rho^\psi(X,Y)-\rho^\psi(JX,JY).
$$
Then 
\begin{align}
\label{eqPsiIntegrableA}
\psi^{-1} N_{J^\psi}(X,Y)=N_J(U,V)+S^\psi(U,V),
\end{align}
where $U:=\psi^{-1} X$ and $V:=\psi^{-1}Y$. In particular, $J^\psi$ is integrable if and only if
\begin{equation}
\label{eqPsiIntegrableB}
S^\psi(X,Y)=-N_J(X,Y).
\end{equation}
\end{proposition}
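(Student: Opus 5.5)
The plan is a direct substitution: compute $N_{J^\psi}(X,Y)$ with $X=\psi U$ and $Y=\psi V$, and rewrite every bracket of $\psi$-images through $\rho^\psi$. By definition of $\rho^\psi$,
$$
[\psi A,\psi B]=\psi\bigl([A,B]+\rho^\psi(A,B)\bigr)
$$
for all vector fields $A,B$. Since $J^\psi=\psi J\psi^{-1}$, we have $J^\psi X=\psi JU$ and $J^\psi Y=\psi JV$. Hence each of the four brackets in
$$
N_{J^\psi}(X,Y)=J^\psi[J^\psi X,Y]+J^\psi[X,J^\psi Y]+[X,Y]-[J^\psi X,J^\psi Y]
$$
is a bracket of two $\psi$-images:
$$
[\psi JU,\psi V],\qquad [\psi U,\psi JV],\qquad [\psi U,\psi V],\qquad [\psi JU,\psi JV].
$$

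Next, substitute the bracket identity into each term, using $J^\psi\psi=\psi J$ to move the outer $J^\psi$ past $\psi$. This gives
$$
J^\psi[\psi JU,\psi V]=\psi J\bigl([JU,V]+\rho^\psi(JU,V)\bigr),
$$
and similarly for the other three terms. Applying $\psi^{-1}$ to the sum and grouping, the pure-bracket terms reassemble into $N_J(U,V)$, while the $\rho^\psi$ terms reassemble into
$$
J\rho^\psi(JU,V)+J\rho^\psi(U,JV)+\rho^\psi(U,V)-\rho^\psi(JU,JV)=S^\psi(U,V).
$$
This proves (\ref{eqPsiIntegrableA}).

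For the integrability statement, recall that $J^\psi$ is integrable if and only if $N_{J^\psi}\equiv0$, by the Newlander--Nirenberg theorem as used in the introduction. Since $\psi$ is invertible, this holds if and only if $N_J(U,V)+S^\psi(U,V)=0$ for all $U,V$. As $\psi^{-1}$ is a bijection on vector fields, $U,V$ range over all vector fields, so this is exactly (\ref{eqPsiIntegrableB}) after renaming $U,V$ as $X,Y$.

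There is no real obstacle here; the only care needed is sign and ordering bookkeeping, namely checking that the $-[J^\psi X,J^\psi Y]$ term produces $-\rho^\psi(JU,JV)$ with the correct sign. It is also worth remarking that $\rho^\psi$ is $C^\infty(M)$-linear even though the Lie bracket is not (the derivative terms $(\psi A)f\,\psi B$ cancel), but the identity is pointwise algebraic after substitution, so tensoriality is not strictly needed.
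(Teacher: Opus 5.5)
Your proposal is correct and is the paper's proof: set $X=\psi U$, $Y=\psi V$, write $[\psi A,\psi B]=\psi\bigl([A,B]+\rho^\psi(A,B)\bigr)$, pull $\psi$ out using $J^\psi\psi=\psi J$, and regroup into $N_J(U,V)+S^\psi(U,V)$. Your closing aside is wrong, though the proof does not use it: $\rho^\psi$ is \emph{not} $C^\infty(M)$-linear, since $\rho^\psi(A,fB)=f\rho^\psi(A,B)+\bigl((\psi A)f-Af\bigr)B$ (the obstruction described in the introduction), whereas $S^\psi=\psi^{-1}N_{J^\psi}(\psi\cdot,\psi\cdot)-N_J$ is tensorial because these derivative terms cancel in the combination defining $S^\psi$.
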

\begin{proof}
Let $I:=J^\psi$, $U:=\psi^{-1} X$, and $V:=\psi^{-1}Y$.  The proposition follows by direct calculation:
\begin{align*}
N_{I}(X,Y)&=I[IX,Y]+I[X,IY]+[X,Y]-[IX,IY]\\
&=I[\psi J U,\psi V]+I[\psi U,\psi J V]+[\psi U,\psi V]-[\psi J U,\psi J V]\\
&=\psi J(\psi^{-1}[\psi J U,\psi V]-[JU,V])+\psi J[JU,V]\\
&+\psi J (\psi^{-1}[\psi U, \psi JV]-[U,JV])+\psi J[U,JV]\\
&+\psi(\psi^{-1}[\psi U,\psi V]-[U,V])+\psi [U,V]\\
&-\psi(\psi^{-1}[\psi JU,\psi JV]-[JU,JV])-\psi [JU,JV]\\
&=\psi (J\rho^\psi(JU,V)+J\rho^\psi(U,JV)+\rho^\psi(U,V)-\rho^\psi(JU,JV))\\
&+\psi N_J(U,V)\\
&=\psi S^\psi(U,V)+\psi N_J(U,V).
\end{align*}
\end{proof}
\noindent Applying Proposition \ref{propPsiIntegrable} to the special case of $g$-Codazzi maps gives the following:
\begin{proposition}
\label{propIntCodazzi}
Let $\psi\in \mbox{Aut}(TM)$ be a $g$-Codazzi map.  Then $J^\psi$ is integrable if and only if
$$
J(\nabla^g_{\psi X}J)Y-J(\nabla^g_{\psi Y}J)X=(\nabla^g_{\psi JX}J)Y-(\nabla^g_{\psi JY}J)X.
$$
\end{proposition}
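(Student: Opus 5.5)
The plan is to avoid Proposition \ref{propPsiIntegrable} and compute $N_{J^\psi}$ directly with the Levi-Civita connection of the twisted metric $h:=g^\psi$. Proposition \ref{propLeviCivita} shows that, for a $g$-Codazzi map, $\nabla^h$ is simply $\nabla^g$ conjugated by $\psi$, so all derivatives of $I:=J^\psi$ reduce to derivatives of $J$.

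\textbf{Step 1: a formula for $N_I$.} For any almost complex structure $I$ and any torsion-free connection $\nabla$, substitute $[A,B]=\nabla_AB-\nabla_BA$ into
$$N_I(X,Y)=I[IX,Y]+I[X,IY]+[X,Y]-[IX,IY].$$
Then expand with $\nabla_A(IB)=(\nabla_AI)B+I\nabla_AB$ and $I^2=-\mathrm{id}$. All terms containing $\nabla_XY$, $\nabla_YX$, $\nabla_{IX}Y$ and $\nabla_{IY}X$ should cancel in pairs, leaving
$$
N_I(X,Y)=I(\nabla_XI)Y-I(\nabla_YI)X-(\nabla_{IX}I)Y+(\nabla_{IY}I)X .
$$
I will record this as a short routine computation.

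\textbf{Step 2: compute $\nabla^h I$.} Apply Step 1 with $\nabla=\nabla^h$, which is torsion-free. By (\ref{eqLCB}), exactly as in the proof of Proposition \ref{propOmegaPsi} but without assuming $\nabla^gJ=0$, we get
$$(\nabla^h_XI)Y=\psi\,(\nabla^g_XJ)(\psi^{-1}Y)$$
for all $X,Y$. Set $U:=\psi^{-1}X$ and $V:=\psi^{-1}Y$. Then $IX=\psi JU$, $IY=\psi JV$, and $I\psi=\psi J$. Substituting into Step 1 gives
$$
N_{J^\psi}(X,Y)=\psi\Big(J(\nabla^g_{\psi U}J)V-J(\nabla^g_{\psi V}J)U-(\nabla^g_{\psi JU}J)V+(\nabla^g_{\psi JV}J)U\Big).
$$

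\textbf{Step 3: conclude.} Since $\psi$ is an automorphism, $N_{J^\psi}\equiv0$ if and only if the bracketed expression vanishes for all $U,V$. Because $(U,V)$ ranges over all pairs of vector fields as $(X,Y)$ does, renaming $U,V$ as $X,Y$ gives exactly the stated condition.

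I expect no real obstacle. The only delicate point is bookkeeping in Step 1: tracking signs and making sure the subscript of each $\nabla$ is $\psi U$ or $\psi JU$ rather than $U$, since the direction of differentiation is not conjugated by $\psi$. The Codazzi hypothesis enters only through (\ref{eqLCB}).
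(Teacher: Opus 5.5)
Your proof is correct. It reaches the same identity as the paper,
$$\psi^{-1}N_{J^\psi}(\psi U,\psi V)=J(\nabla^g_{\psi U}J)V-J(\nabla^g_{\psi V}J)U-(\nabla^g_{\psi JU}J)V+(\nabla^g_{\psi JV}J)U,$$
but by a different route.

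\textbf{The paper's route.} It first proves Proposition~\ref{propPsiIntegrable} for an arbitrary $\psi\in\mathrm{Aut}(TM)$. This writes $\psi^{-1}N_{J^\psi}$ as $N_J+S^\psi$, where $S^\psi$ is built from the bracket defect $\rho^\psi(X,Y)=\psi^{-1}[\psi X,\psi Y]-[X,Y]$. Only then does the Codazzi hypothesis enter, through (\ref{eqPsiSwitch}): it gives $\rho^\psi(X,Y)=\nabla^g_{\psi X}Y-\nabla^g_{\psi Y}X-[X,Y]$. Expanding $S^\psi$ in terms of $\nabla^gJ$ then produces a $-N_J$ term that cancels the $N_J$ term.

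\textbf{Your route.} You use the standard expression of the Nijenhuis tensor through any torsion-free connection. You apply it to $\nabla^h=\psi\nabla^g\psi^{-1}$, together with the conjugation formula $\nabla^h J^\psi=\psi(\nabla^gJ)\psi^{-1}$, so the cancellation of $N_J$ never has to be seen.

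\textbf{Comparison.} Your argument is shorter and more conceptual. It shows that the Codazzi hypothesis matters only because it makes $\psi\nabla^g\psi^{-1}$ torsion-free. Indeed, that connection has torsion $\psi((\nabla^g_X\psi^{-1})Y-(\nabla^g_Y\psi^{-1})X)$, and the conjugation formula for the derivative of $J^\psi$ holds for every $\psi$. So you do not actually need the twisted metric $h$ or Proposition~\ref{propLeviCivita}. After that, the result is just naturality of the Nijenhuis formula under simultaneous conjugation of $J$ and the connection.

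What the paper's route buys is a formula valid for every automorphism, with the failure of the Codazzi condition isolated in $\rho^\psi$. That is the natural starting point for the more general cases of Problem~\ref{conjPsiTwist}. Your method could supply an analogous general formula if you kept the torsion terms in Step 1.
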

\begin{proof}
With the help of (\ref{eqPsiSwitch}), we can rewrite $\rho^\psi$ as 
\begin{align}
\nonumber
\rho^\psi(X,Y)&=\psi^{-1}[\psi X, \psi Y]-[X,Y]\\
\nonumber
&=\psi^{-1}\left( \nabla^g_{\psi X}(\psi Y)-\nabla^g_{\psi Y}(\psi X)\right)-[X,Y]\\
\nonumber
&=\psi^{-1}\left((\nabla^g_{\psi X}\psi)Y+\psi\nabla^g_{\psi X}Y\right)-\psi^{-1}\left((\nabla^g_{\psi Y}\psi)X+\psi\nabla^g_{\psi Y}X\right)\\
\nonumber
&-[X,Y]\\
\nonumber
&=\psi^{-1}\left((\nabla^g_{\psi Y}\psi)X+\psi\nabla^g_{\psi X}Y\right)-\psi^{-1}\left((\nabla^g_{\psi Y}\psi)X+\psi\nabla^g_{\psi Y}X\right)\\
\nonumber
&-[X,Y]\\
\label{eqCodazziInt1}
&=\nabla^g_{\psi X}Y-\nabla^g_{\psi Y}X-[X,Y].
\end{align}
Let $S^\psi$ be defined as in Proposition \ref{propPsiIntegrable}.  Using (\ref{eqCodazziInt1}), $S^\psi$ can be rewritten as
\begin{align}
\nonumber
S^\psi(X,Y)&=J\nabla^g_{\psi JX}Y-J\nabla^g_{\psi Y}(JX)-J[JX,Y]\\
\nonumber
&+J\nabla^g_{\psi X}(JY)-J\nabla^g_{\psi JY}X-J[X,JY]\\
\nonumber
&+\nabla^g_{\psi X}Y-\nabla^g_{\psi Y}X-[X,Y]\\
\nonumber
&-\nabla^g_{\psi JX}(JY)+\nabla^g_{\psi JY}(JX)+[JX,JY]\\
\nonumber
&=  J(\nabla^g_{\psi X}J)Y-J(\nabla^g_{\psi Y}J)X-(\nabla^g_{\psi JX}J)Y+(\nabla^g_{\psi JY}J)X\\
\label{eqCodzziInt2}
&-N_J(X,Y).
\end{align}
It now follows from Proposition \ref{propPsiIntegrable} that $J^\psi$ is integrable if and only if
$$
J(\nabla^g_{\psi X}J)Y-J(\nabla^g_{\psi Y}J)X=(\nabla^g_{\psi JX}J)Y-(\nabla^g_{\psi JY}J)X.
$$
\end{proof}
\noindent Applying Proposition \ref{propIntCodazzi} to nearly K\"{a}hler manifolds, one obtains
\begin{corollary}
\label{corNKCodazzi}
Let $(M,g,J,\omega)$ be a nearly K\"{a}hler manifold and let $\psi$ be a $g$-Codazzi map.  Then $J^\psi$ is integrable if and only if
\begin{equation}
\label{eqNKCodazzi1}
(\nabla^g_XJ)(J\psi+\psi J)Y=(\nabla^g_Y J)(J\psi +\psi J)X.
\end{equation}
In particular, if $(M,g,J,\omega)$ is strictly nearly K\"{a}hler, then no $g$-Codazzi map anti-commutes with $J$.
\end{corollary}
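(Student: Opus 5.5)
The plan is to start from the integrability criterion of Proposition \ref{propIntCodazzi} and rewrite each of its four terms using the two standard algebraic identities of a nearly K\"{a}hler structure. These identities are
\begin{equation*}
(\nabla^g_A J)B=-(\nabla^g_B J)A,\qquad (\nabla^g_A J)JB=-J(\nabla^g_A J)B .
\end{equation*}
The first is the defining condition. The second follows by differentiating $J^2=-\mathrm{id}$, which gives $(\nabla^g_AJ)J=-J(\nabla^g_AJ)$.

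For the left-hand side of the criterion, I first swap the arguments using skew-symmetry and then move $J$ across $\nabla^g_YJ$:
\begin{equation*}
J(\nabla^g_{\psi X}J)Y=-J(\nabla^g_Y J)\psi X=(\nabla^g_YJ)J\psi X .
\end{equation*}
Likewise $J(\nabla^g_{\psi Y}J)X=(\nabla^g_XJ)J\psi Y$. For the right-hand side, skew-symmetry alone gives
\begin{equation*}
(\nabla^g_{\psi JX}J)Y=-(\nabla^g_YJ)\psi JX,\qquad (\nabla^g_{\psi JY}J)X=-(\nabla^g_XJ)\psi JY .
\end{equation*}
Substituting these into Proposition \ref{propIntCodazzi}, the criterion becomes
\begin{equation*}
(\nabla^g_YJ)J\psi X-(\nabla^g_XJ)J\psi Y=-(\nabla^g_YJ)\psi JX+(\nabla^g_XJ)\psi JY .
\end{equation*}
Collecting terms gives $(\nabla^g_YJ)(J\psi+\psi J)X=(\nabla^g_XJ)(J\psi+\psi J)Y$, which is (\ref{eqNKCodazzi1}). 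The only point that needs care is the sign bookkeeping in these substitutions.

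For the ``in particular'' statement, suppose $\psi$ is a $g$-Codazzi map with $\psi J=-J\psi$. Then $J^\psi=\psi J\psi^{-1}=-J$. Moreover $J\psi+\psi J=0$, so (\ref{eqNKCodazzi1}) holds trivially, and hence $J^\psi=-J$ is integrable by the equivalence just proved. On the other hand, $N_{-J}=N_J$, since every term of the Nijenhuis tensor contains an even number of factors of $J$. For a nearly K\"{a}hler manifold, $N_J(X,Y)$ is a nonzero multiple of $J(\nabla^g_XJ)Y$, and in the strictly nearly K\"{a}hler case $\nabla^g J\neq 0$, so $N_J$ does not vanish identically. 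Therefore $-J$ is not integrable, a contradiction, and no $g$-Codazzi map can anti-commute with $J$.

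The step I expect to need the most attention is this last fact: that $N_J\neq 0$ in the strictly nearly K\"{a}hler case. I would verify it directly by expanding the Lie brackets in $N_J$ via the torsion-free connection $\nabla^g$ and applying the two identities above, which yields $N_J(X,Y)=-4J(\nabla^g_XJ)Y$ up to the paper's sign conventions. Alternatively, I would quote it as a standard fact.
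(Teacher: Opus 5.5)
Your proof is correct and follows the paper's argument essentially verbatim. You apply the nearly K\"{a}hler skew-symmetry and $J(\nabla^g_XJ)=-(\nabla^g_XJ)J$ to the criterion of Proposition \ref{propIntCodazzi} and collect terms, and in the anti-commuting case you observe that $J^\psi=-J$ would be integrable. The step you flagged is immediate here, since the paper defines ``strictly nearly K\"{a}hler'' as $J$ being non-integrable, so $N_{-J}=N_J\neq 0$ needs no further work. Incidentally, the paper's convention gives $N_J(X,Y)=4J(\nabla^g_XJ)Y$ (cf.\ (\ref{eqS6Twist1})), but the sign is irrelevant to your argument.
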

\begin{proof}
Recall that $(M,g,J,\omega)$ is nearly K\"{a}hler if $(g,J,\omega)$ is an almost Hermitian structure on $M$ such that $(\nabla^g_XJ)Y=-(\nabla^g_YJ)X$ for all $X,Y\in \mathfrak{X}(M)$.  $(M,g,J,\omega)$ is said to be strictly nearly K\"{a}hler if $J$ is also non-integrable.   With $(M,g,J,\omega)$ a nearly K\"{a}hler manifold and $\psi$ a $g$-Codazzi map, Proposition \ref{propIntCodazzi} implies that $J^\psi$ is integrable if and only if
\begin{align}
-J(\nabla^g_YJ)(\psi X)+J(\nabla^g_XJ)(\psi Y)=-(\nabla^g_YJ)(\psi JX)+(\nabla^g_XJ)(\psi JY).
\end{align}
Since  $J^2=-\mbox{id}$, it follows that $J(\nabla^g_X J)=-(\nabla^g_X J)J$ and the above equation can be rewritten as
$$
(\nabla^g_YJ)(J\psi X)-(\nabla^g_XJ)(J\psi Y)=-(\nabla^g_YJ)(\psi JX)+(\nabla^g_XJ)(\psi JY).
$$
By rearranging the terms, one obtains the desired identity.  

Lastly, suppose  $(M,g,J,\omega)$ is strictly nearly K\"{a}hler and $\psi$ is a $g$-Codazzi map which anti-commutes with $J$.  Then (\ref{eqNKCodazzi1}) is satisfied which in turn implies that $J^\psi$ is integrable.  However, this is a contradiction since $J^\psi=-J$.
\end{proof}
We recall that for a nearly K\"{a}hler manifold $(M,g,J,\omega)$ one has the following identity \cite{Gray1970}:
\begin{equation}
\label{eqNK1}
\frac{1}{3}d\omega(X,Y,Z)=(\nabla^g_X\omega)(Y,Z)=g((\nabla^g_XJ)Y,Z),
\end{equation}
where we note that the second equality holds for any almost Hermitian manifold.  The above identity implies 
\begin{equation}
\label{eqNK2}
d\omega(X,JY,JZ)=-d\omega(X,Y,Z),
\end{equation}
which in turn implies
\begin{equation}
\label{eqNK3}
d\omega(JX,Y,Z)=d\omega(X,JY,Z)=d\omega(X,Y,JZ).
\end{equation}
Using (\ref{eqNK1}), (\ref{eqNK3}) can be rewritten as
\begin{equation}
\label{eqNK4}
(\nabla^g_{JX}\omega)(Y,Z)=(\nabla^g_X\omega)(JY,Z)=(\nabla^g_X\omega)(Y,JZ).
\end{equation} 
Moreover, we also recall that $d\omega$ is of type $(3,0)+(0,3)$ for a nearly K\"{a}hler manifold.  
\begin{proposition}
\label{propNK2}
Let $(M,g,J,\omega)$ be nearly K\"{a}hler and suppose $\psi\in \mbox{Aut}(TM)$ is a $g$-Codazzi map such that $I:=J^\psi$ is integrable.  Let $F:=\psi^{-1}$.  Then 
\begin{align}
\nonumber
(\nabla^g_{Y}\omega)(FX,JFZ)&+(\nabla^g_{IY}\omega)(FX,FZ)\\
\label{eqNK2A}
&=(\nabla^g_{X}\omega)(FY,JFZ)+(\nabla^g_{IX}\omega)(FY,FZ),
\end{align}
for all vector fields $X,Y,Z$.  In particular, if $X,Y,Z$ are all $(1,0)$-vector fields or all $(0,1)$-vector fields, then
\begin{equation}
\label{eqNK2B}
(\nabla^g_X\omega)(FY,FZ)+(\nabla^g_Y\omega)(FZ,FX)=0.
\end{equation}
\end{proposition}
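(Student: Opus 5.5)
The plan is to start from the integrability criterion of Corollary \ref{corNKCodazzi} and turn it into a scalar identity by pairing with a third vector field. The total skew-symmetry of $\nabla^g\omega$ in the nearly K\"{a}hler case, from (\ref{eqNK1}), will let the derivative slot and the argument slots be exchanged. Throughout, $F=\psi^{-1}$ and $I=J^\psi=\psi J F$, so that $\psi J=I\psi$ and $JF=FI$.

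\emph{Step 1: substitute into the criterion.} Since $J^\psi$ is integrable, Corollary \ref{corNKCodazzi} gives
$$(\nabla^g_{X'}J)(J\psi+\psi J)Y'=(\nabla^g_{Y'}J)(J\psi+\psi J)X'$$
for all $X',Y'$. I will put $X'=FX$ and $Y'=FY$. Then $(J\psi+\psi J)FY=JY+IY$, and similarly for $X$, so the identity becomes
$$(\nabla^g_{FX}J)(JY+IY)=(\nabla^g_{FY}J)(JX+IX).$$

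\emph{Step 2: pair with $FZ$ and move $\nabla$.} I take the $g$-inner product of both sides with $FZ$ and use $g((\nabla^g_AJ)B,C)=(\nabla^g_A\omega)(B,C)$. Total skew-symmetry of $\nabla^g\omega$ (from (\ref{eqNK1})) then lets the derivative slot move. The left side becomes
$$(\nabla^g_{JY}\omega)(FZ,FX)+(\nabla^g_{IY}\omega)(FZ,FX),$$
and the right side is the same expression with $X$ and $Y$ interchanged.

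\emph{Step 3: move $J$ off the derivative slot.} Using (\ref{eqNK4}) together with skew-symmetry of $(\nabla^g_Y\omega)$ in its two arguments,
$$(\nabla^g_{JY}\omega)(FZ,FX)=(\nabla^g_Y\omega)(FZ,JFX)=-(\nabla^g_Y\omega)(JFX,FZ)=-(\nabla^g_Y\omega)(FX,JFZ).$$
Also $(\nabla^g_{IY}\omega)(FZ,FX)=-(\nabla^g_{IY}\omega)(FX,FZ)$. Hence each side equals minus the corresponding side of (\ref{eqNK2A}), which proves (\ref{eqNK2A}).

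\emph{Step 4: the special case (\ref{eqNK2B}).} I extend everything $\mathbb{C}$-linearly and take $X,Y,Z$ to be $(1,0)$-vector fields for $I$. Because $JF=FI$, the fields $FX,FY,FZ$ are $(1,0)$ for $J$, so $JFZ=iFZ$ and $IY=iY$. The left side of (\ref{eqNK2A}) then equals $2i(\nabla^g_Y\omega)(FX,FZ)$, and likewise the right side equals $2i(\nabla^g_X\omega)(FY,FZ)$. Equating and using skew-symmetry gives (\ref{eqNK2B}). The $(0,1)$ case is identical with $i$ replaced by $-i$.

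The main obstacle is purely bookkeeping: tracking the signs when swapping slots and applying (\ref{eqNK4}), and making sure the $\mathbb{C}$-linear extension of $\nabla^g\omega$ in the derivative slot is legitimate. The latter holds because $\nabla^g\omega$ is tensorial in that slot.
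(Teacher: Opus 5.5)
Your proof is correct and follows essentially the same route as the paper's. Both pair the criterion of Corollary \ref{corNKCodazzi} with a third field, substitute $FX,FY,FZ$, and use total skew-symmetry together with (\ref{eqNK4}), then specialize to $(1,0)$/$(0,1)$ fields via $JF=FI$. The only differences are that you substitute before pairing and apply skew-symmetry before (\ref{eqNK4}), which is just a reordering of the same bookkeeping.
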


\begin{proof}
Applying the metric $g$ to both sides of (\ref{eqNKCodazzi1}) gives
\begin{align}
\nonumber
g((\nabla^g_XJ)&(J\psi Y),Z)+g((\nabla^g_XJ)(\psi J Y),Z)\\
\label{eqNK2a}
&=g((\nabla^g_Y J)(J\psi X),Z)+g((\nabla^g_Y J)((\psi J X),Z).
\end{align}
With the help of (\ref{eqNK1}), (\ref{eqNK2a}) can be rewritten as
\begin{align}
\nonumber
(\nabla^g_X\omega)&(J\psi Y,Z)+(\nabla^g_X\omega)(\psi JY,Z)\\
\label{eqNK2b}
&=(\nabla^g_Y\omega)(J\psi X,Z)+(\nabla^g_Y\omega)(\psi JX, Z).
\end{align}
Making the substitutions $X\rightarrow FX$, $Y\rightarrow FY$, and $Z\rightarrow FZ$ and using (\ref{eqNK4}), we obtain
\begin{align}
\nonumber
(\nabla^g_{FX}\omega)(Y,JFZ)&+(\nabla^g_{FX}\omega)(IY,FZ)\\
\label{eqNK2c}
&=(\nabla^g_{FY}\omega)(X,JFZ)+(\nabla^g_{FY}\omega)(IX,FZ).
\end{align}
Applying the total skew-symmetry of $(\nabla^g_X\omega)(Y,Z)$ to (\ref{eqNK2c}) gives (\ref{eqNK2A}).
From the definition of $I$, it follows that if $X$ is $(1,0)$ (resp. $(0,1)$) with respect to $I$, then $FX$ is $(1,0)$ (resp. $(0,1)$) with respect to $J$.  Hence if $X,Y,Z$  are all $(1,0)$ with respect to $I$, then (\ref{eqNK2A}) reduces to 
$$
2i(\nabla^g_Y\omega)(FX,FZ)=2i(\nabla^g_X\omega)(FY,FZ).
$$
Replacing $X,Y,Z$ with $(0,1)$-vector fields gives the same expression except $2i$ is now replaced with $-2i$.  This proves (\ref{eqNK2B}).
\end{proof}

\begin{remark}
As a consistency check, note that if we sum all cyclic permutations of (\ref{eqNK2B}) in Proposition \ref{propNK2}, we obtain 
$$
(\nabla^g_X\omega)(FY,FZ)+(\nabla^g_Y\omega)(FZ,FX)+(\nabla^g_Z\omega)(FX,FY)=0,
$$
where $F:=\psi^{-1}$ and $X,Y,Z$ are all $(1,0)$-vector fields or all $(0,1)$-vector fields.  By Proposition \ref{propOmegaPsi}, this means
$$
(d\omega^\psi)(X,Y,Z)=0
$$
whenever $X,Y,Z$ are all $(1,0)$-vector fields or all $(0,1)$-vector fields.  In other words, $d\omega^\psi$ is of type $(2,1)+(1,2)$.  Since $\omega^\psi$ is of type $(1,1)$ with respect to $I:=J^\psi$ and  $I$ is assumed to be integrable in the statement of Proposition \ref{propNK2}, the aforementioned condition on $d\omega^\psi$ is precisely what must hold under these conditions.  This observation serves as a nice consistency check on our formulas.   It is interesting to note that Proposition \ref{propNK2} does not seem to place any additional constraints on $d\omega^\psi$ that we can see.
\end{remark}

\begin{proposition}
\label{propNKCodazziSA}
Let $(M,g,J,\omega)$ be a nearly K\"{a}hler manifold and let $\psi\in \mbox{Aut}(TM)$ be a self-adjoint $g$-Codazzi map.  Then $J^\psi$ is integrable if and only if
$$
(\nabla^g_Z J)\circ \mathcal{K}-\mathcal{K}\circ (\nabla^g_ZJ)=0
$$
for all $Z\in \mathfrak{X}(M)$ where $\mathcal{K}:=J\psi+\psi J$.
\end{proposition}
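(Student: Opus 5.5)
By Corollary \ref{corNKCodazzi}, $J^\psi$ is integrable if and only if (\ref{eqNKCodazzi1}) holds, that is,
$$
(\nabla^g_XJ)\mathcal{K}Y=(\nabla^g_YJ)\mathcal{K}X\quad\text{for all } X,Y,
$$
with $\mathcal{K}:=J\psi+\psi J$. Since $g$ is nondegenerate, this vector identity holds if and only if its pairing with an arbitrary $Z$ holds. The plan is to pair both sides with $Z$ and then use the total skew-symmetry of $(\nabla^g_X\omega)(Y,Z)=g((\nabla^g_XJ)Y,Z)$ from (\ref{eqNK1}). This moves the differentiation direction onto $Z$ in both terms. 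The resulting scalar identity should then read, for all $X,Y,Z$, as $g\big(((\nabla^g_ZJ)\mathcal{K}-\mathcal{K}(\nabla^g_ZJ))X,Y\big)=0$.

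The first step is the only place self-adjointness of $\psi$ enters. Since $J^\dagger=-J$ and $\psi^\dagger=\psi$, we get $\mathcal{K}^\dagger=\psi^\dagger J^\dagger+J^\dagger\psi^\dagger=-\psi J-J\psi=-\mathcal{K}$, so $\mathcal{K}$ is skew-adjoint with respect to $g$.

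Next I treat the left side. Using (\ref{eqNK1}) and cyclic symmetry,
$$
g((\nabla^g_XJ)\mathcal{K}Y,Z)=(\nabla^g_X\omega)(\mathcal{K}Y,Z)=(\nabla^g_Z\omega)(X,\mathcal{K}Y)=g((\nabla^g_ZJ)X,\mathcal{K}Y).
$$
Skew-adjointness of $\mathcal{K}$ turns this into $-g(\mathcal{K}(\nabla^g_ZJ)X,Y)$.

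Then I treat the right side. By the same identity,
$$
g((\nabla^g_YJ)\mathcal{K}X,Z)=(\nabla^g_Y\omega)(\mathcal{K}X,Z)=(\nabla^g_Z\omega)(Y,\mathcal{K}X)=-(\nabla^g_Z\omega)(\mathcal{K}X,Y),
$$
and this equals $-g((\nabla^g_ZJ)\mathcal{K}X,Y)$.

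Equating the two sides, (\ref{eqNKCodazzi1}) is equivalent to $g\big(\mathcal{K}(\nabla^g_ZJ)X-(\nabla^g_ZJ)\mathcal{K}X,Y\big)=0$ for all $X,Y,Z$. By nondegeneracy of $g$ and arbitrariness of $X,Y$, this says exactly $(\nabla^g_ZJ)\circ\mathcal{K}-\mathcal{K}\circ(\nabla^g_ZJ)=0$ for all $Z$. Every step is an equivalence, so both directions follow at once.

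The main obstacle is bookkeeping the signs in the cyclic and skew permutations of $\nabla^g\omega$, and making sure the adjoint of $\mathcal{K}$, rather than of $\psi$ alone, is the operator used. No further input beyond (\ref{eqNK1}) and Corollary \ref{corNKCodazzi} should be needed.
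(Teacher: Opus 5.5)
Your proposal is correct and follows essentially the same route as the paper. You pair the identity from Corollary \ref{corNKCodazzi} with $Z$, use the total skew-symmetry of $(\nabla^g_X\omega)(Y,Z)=g((\nabla^g_XJ)Y,Z)$ to move the differentiation direction onto $Z$, and use $\mathcal{K}^\dagger=-\mathcal{K}$ to reach $g\big(((\nabla^g_ZJ)\mathcal{K}-\mathcal{K}(\nabla^g_ZJ))X,Y\big)=0$. The only cosmetic difference is that the paper invokes skew-adjointness of $\nabla^g_ZJ$ explicitly, where you use the antisymmetry of $\nabla^g_Z\omega$ in its last two slots; that is the same fact.
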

\begin{proof}
By Corollary \ref{corNKCodazzi}, $J^\psi$ is integrable if and only if
\begin{equation}
\label{eqNKCodazziSA1}
g((\nabla^g_XJ)(\mathcal{K}Y),Z)=g((\nabla^g_YJ)(\mathcal{K}X),Z)
\end{equation}
for all $X,Y,Z\in \mathfrak{X}(M)$.  Since $\psi$ is self-adjoint and $J$ is skew-adjoint (with respect to $g$), it follows that $\mathcal{K}^\dagger=-\mathcal{K}$.  In addition, $\nabla^g_ZJ$ is also skew-adjoint with respect to $g$.  Indeed, this follows from the fact that, for any almost Hermitian manifold, one has
\begin{equation}
\label{eqNKCodazziSA2}
g((\nabla^g_ZJ)X,Y)=(\nabla^g_Z\omega)(X,Y).
\end{equation}
The nearly K\"{a}hler hypothesis now implies that the right side (and hence the left side) of the above equation is totally skew-symmetric in $X,Y,Z$.  Consequently, we have
\begin{equation}
\label{eqNKCodazziSA3}
g((\nabla^g_ZJ)X,Y)=-g((\nabla^g_ZJ)Y,X)=-g(X,(\nabla^g_ZJ)Y).
\end{equation}
In other words, $(\nabla^g_ZJ)^\dagger =-\nabla^g_ZJ$.  From this, (\ref{eqNKCodazziSA1}) can be rewritten as
\begin{align*}
&g((\nabla^g_XJ)(\mathcal{K}Y),Z)=g((\nabla^g_YJ)(\mathcal{K}X),Z),\\
&-g((\nabla^g_ZJ)(\mathcal{K}Y),X)=-g((\nabla^g_ZJ)(\mathcal{K}X),Y),\\
&g(\mathcal{K}Y,(\nabla^g_ZJ)X)=-g((\nabla^g_ZJ)(\mathcal{K}X),Y),\\
&-g(Y,\mathcal{K}(\nabla^g_ZJ)X)=-g((\nabla^g_ZJ)(\mathcal{K}X),Y).
\end{align*}
The last equality can be rewritten as
$$
g\left([(\nabla^g_ZJ)\circ \mathcal{K}-\mathcal{K}\circ(\nabla^g_ZJ)]X,Y\right)=0.
$$
Since $g$ is nondegenerate and $X,Y,Z$ are arbitrary vector fields, the desired equality follows.
\end{proof}
 
\begin{theorem}
\label{thmS6Twist}
Let $(S^6,g,J,\omega)$ be the standard nearly K\"{a}hler structure on the (unit) 6-sphere and let $\psi\in \mbox{Aut}(TM)$ be any $g$-Codazzi map.  Then $J^\psi$ is nonintegrable.
\end{theorem}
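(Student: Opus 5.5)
The plan is to argue by contradiction and reduce the integrability condition to pointwise linear algebra on $T_pS^6$. Suppose $\psi$ is a $g$-Codazzi map with $J^\psi$ integrable. By Theorem \ref{thmgCodazziSnSA} (with $n=6\ge 3$), $\psi$ is self-adjoint with respect to $g$. Proposition \ref{propNKCodazziSA} then applies and gives
$$
(\nabla^g_ZJ)\circ\mathcal{K}=\mathcal{K}\circ(\nabla^g_ZJ)
$$
for all $Z$, where $\mathcal{K}=J\psi+\psi J$.

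The first step records two algebraic properties of $\mathcal{K}$.
\begin{itemize}
\item Since $\psi^\dagger=\psi$ and $J^\dagger=-J$, we have $\mathcal{K}^\dagger=-\mathcal{K}$.
\item A direct check gives $J\mathcal{K}=-\psi+J\psi J=\mathcal{K}J$, so $\mathcal{K}$ commutes with $J$.
\end{itemize}
Write $A_Z:=\nabla^g_ZJ$. From $J^2=-\mathrm{id}$ we have $A_ZJ=-JA_Z$. I will use the standard identity for the nearly K\"ahler $S^6$ of constant curvature $1$:
$$
|A_ZY|^2=|Z|^2|Y|^2-g(Z,Y)^2-g(JZ,Y)^2 .
$$
It follows from Gray's identities, or from the octonionic description $(\nabla_XJ)Y=X\times Y$ up to projection. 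For $Z\neq 0$ it shows that $\ker A_Z=\mathrm{span}\{Z,JZ\}$ exactly.

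The main step is to show $\mathcal{K}=0$ at a point $p$. Since $\mathcal{K}$ commutes with $A_Z$, it preserves $\ker A_Z$. Hence $\mathcal{K}$ preserves every complex line $\mathbb{C}Z=\mathrm{span}\{Z,JZ\}$ in $(T_pS^6,J)$. A $J$-linear map preserving every complex line is a complex scalar, by the usual argument comparing $v$, $w$ and $v+w$. So $\mathcal{K}=a\,\mathrm{id}+bJ$ for real constants $a,b$. Skew-adjointness of $\mathcal{K}$ forces $a=0$. Commuting with $A_Z$ then gives $bJA_Z=bA_ZJ=-bJA_Z$, so $bA_Z=0$. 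Since $A_Z\neq 0$, we get $b=0$ and therefore $\mathcal{K}=0$ at $p$.

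As $p$ is arbitrary, $J\psi=-\psi J$, so $\psi$ is a $g$-Codazzi map anticommuting with $J$. Since $(S^6,g,J,\omega)$ is strictly nearly K\"ahler, this contradicts Corollary \ref{corNKCodazzi}; directly, it would give $J^\psi=-J$, which is nonintegrable.

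I expect the main obstacle to be justifying $\ker A_Z=\mathrm{span}\{Z,JZ\}$ cleanly, that is, the norm identity for $\nabla J$ on $S^6$. I would derive it from the octonion cross product on $\mathrm{Im}\,\mathbb{O}$, which exhibits $A_Z$ on $\{Z,JZ\}^\perp$ as $|Z|$ times an orthogonal map. Alternatively, I would cite Gray's identity $|(\nabla_XJ)Y|^2=\alpha(|X|^2|Y|^2-g(X,Y)^2-g(JX,Y)^2)$ for nearly K\"ahler $6$-manifolds with $\alpha=1$ on the unit sphere.
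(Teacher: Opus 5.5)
Your proposal is correct and follows essentially the same route as the paper. Both arguments go by contradiction, get self-adjointness of $\psi$ from Theorem \ref{thmgCodazziSnSA} and the commutation relation from Proposition \ref{propNKCodazziSA}, use $\ker(\nabla^g_ZJ)=\mathrm{span}\{Z,JZ\}$ to force $\mathcal{K}=aJ$ and then $a=0$, and conclude with the contradiction $J^\psi=-J$. The differences are minor: you justify the kernel via Gray's constant-type identity rather than nondegeneracy of $N_J$ plus $G_2$-homogeneity, and you obtain $\mathcal{K}\in\mathbb{R}J$ from its $J$-linearity (complex scalar, real part killed by skew-adjointness) instead of the paper's $2\times 2$ skew-matrix argument with $Z=e_1+e_2+e_3$.
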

\begin{proof}
Since $(g,J,\omega)$ is a nearly K\"{a}hler structure, one has
\begin{equation}
\label{eqS6Twist1}
(\nabla^g_XJ)(JY)=-\frac{1}{4}N_J(X,Y),
\end{equation}
where the Nijenhuis tensor $N_J$ is (again) defined using the convention
$$
N_J(X,Y)=J[JX,Y]+J[X,JY]+[X,Y]-[JX,JY].
$$

Now let $\psi\in \mathrm{Aut}(TS^6)$ be any $g$-Codazzi map and let us suppose that $J^\psi$ is integrable.  It is well known that the Nijhenuis tensor of $J$ is ``nondegenerate" (cf \cite{Verbitsky2008}) in the sense that, for all $p\in S^6$, the linear map
$$
N_J:\wedge^2 T^{1,0}_pS^6\rightarrow T^{0,1}_pS^6, \hspace*{0.1in} X^+\wedge Y^+\mapsto N_J(X^+,Y^+)
$$
is an isomorphism where $X^+:=X-iJX$ for $X\in T_pS^6$.  By direct calculation, this is equivalent to the statement that for all $p\in S^6$ and all nonzero $X\in T_pS^6$, the kernel of the linear map from $T_pS^6$ to itself given by
$$
Y\mapsto N_J(X,Y),\hspace*{0.1in}\forall Y\in T_pS^6
$$
has kernel $U_X:=\mathrm{span}\{X,JX\}$.   The diligent reader can verify this by noting that the Lie group $G_2\subset SO(7)$ acts transitively on $S^6$ while preserving the standard nearly K\"{a}hler structure \cite{AgricolaBorowkaFriedrich2017}.  Hence, one only has to verify that the nondegeneracy condition of $N_J$ is satisfied at a single (convenient) point (e.g. $p=(0,0,0,0,0,0,1)$).

From (\ref{eqS6Twist1}), it now follows that for any nonzero $Z\in T_pS^6$, the linear map
$$
\nabla^g_ZJ:T_pS^6\rightarrow T_pS^6
$$
has kernel  $U_Z:=\mbox{span}\{Z,JZ\}$.  Moreover, as shown in the proof of Proposition \ref{propNKCodazziSA}, $\nabla^g_ZJ$ is skew-adjoint with respect to $g$:
$$
g((\nabla^g_ZJ)X,Y)=-g(X,(\nabla^g_ZJ)Y).
$$
From this, it follows that the image of $\nabla^g_ZJ$ lies inside $U_Z^\perp$ where $U_Z^\perp \subset T_pS^6$ is the $g$-orthogonal complement of $U_Z$.  In addition, since $T_pS^6=U_Z\oplus U_Z^\perp$ and the kernel of $\nabla^g_ZJ$ is $U_Z$, we also have $(\nabla^g_ZJ)(U_Z^\perp)=U_Z^\perp$.

By Theorem \ref{thmgCodazziSnSA}, $\psi$ is necessarily self-adjoint (with respect to $g$).  Since $\psi$ is a self-adjoint $g$-Codazzi map and $J^\psi$ is integrable (by our assumption), Proposition \ref{propNKCodazziSA} implies
\begin{equation}
\label{eqS6thm1}
(\nabla^g_ZJ)\circ \mathcal{K} =\mathcal{K}\circ (\nabla^g_ZJ),\hspace*{0.1in}\forall~p\in S^6,~Z\in T_pS^6,
\end{equation}
where $\mathcal{K}:=J\psi+\psi J$.  Since the kernel of $\nabla^g_ZJ$ is $U_Z$ and $(\nabla^g_ZJ)(U_Z^\perp)=U_Z^\perp$, it follows that $\mathcal{K}(U_Z)\subset U_Z$.  With $\psi$ self-adjoint and $J$ skew-adjoint, it follows readily that $\mathcal{K}$ is skew-adjoint.  Since $\{Z,JZ\}$ is a $g$-orthogonal basis, it follows that the matrix representation of $\mathcal{K}|_{U_Z}$ with respect to $\{Z,JZ\}$ is of the form
$$
[\mathcal{K}|_{U_Z}]=\begin{pmatrix} 
0 & -a(Z)\\
a(Z) & 0\end{pmatrix}
$$
for some $a(Z)\in\mathbb{R}$.  Hence, 
\begin{equation}
\label{eqS6thm}
\mathcal{K}|_{U_Z}=a(Z)\cdot J|_{U_Z},
\end{equation}
for all $Z\in T_pS^6$.  Now fix an orthonormal basis on $T_pS^6$ of the form
$$
e_1,~Je_1,~e_2,~Je_2,~e_3,~Je_3.
$$ 
Then $T_pS^6$ decomposes as 
\begin{equation}
\label{eqS6thmA}
T_pS^6=U_1\oplus U_2\oplus U_3,
\end{equation}
where $U_i:=U_{e_i}:=\mbox{span}\{e_i,Je_i\}$ for $i=1,2,3$.  Let $a_i:=a(e_i)$ for $i=1,2,3$.  Then 
\begin{equation}
\label{eqS6thmB}
\mathcal{K}|_{U_i}=a_i\cdot J|_{U_i},\hspace*{0.1in}i=1,2,3.
\end{equation}
We show that $a_1=a_2=a_3$.  Indeed, consider the element $Z=e_1+e_2+e_3$.  Then $Z\in U_Z:=\mbox{span}\{Z,JZ\}$ and
$$
\mathcal{K}Z=a(Z)JZ=a(Z)Je_1+a(Z)Je_2+a(Z)Je_3.
$$
On the other hand, by (\ref{eqS6thmB}), we have
$$
\mathcal{K}Z=\mathcal{K}e_1+\mathcal{K}e_2+\mathcal{K}e_3=a_1Je_1+a_2Je_2+a_3Je_3.
$$
From this, it follows immediately that
$$
a(Z)=a_1=a_2=a_3.
$$
Set $a:=a_1=a_2=a_3$.  Then (\ref{eqS6thmA}) and (\ref{eqS6thmB}) imply 
$$
\mathcal{K}=aJ.
$$
Consequently, for all $Z\in T_pS^6$, we have
\begin{equation}
\label{eqS6thm2}
(\nabla^g_ZJ)\circ \mathcal{K}=a(\nabla^g_ZJ)\circ J=-a J\circ (\nabla^g_ZJ)=-\mathcal{K}\circ (\nabla^g_ZJ).
\end{equation}
Combining (\ref{eqS6thm2}) with (\ref{eqS6thm1}) gives
$$
\mathcal{K}\circ (\nabla^g_ZJ)=a J\circ (\nabla^g_ZJ)=0.
$$
Since $J$ is invertible and $\nabla^g_ZJ\neq 0$ (since its kernel has dimension $2$ for nonzero $Z\in T_pS^6$), it follows that $a=0$.  Since $p\in S^6$ was arbitrary, we have $\mathcal{K}=0$.  The definition of $\mathcal{K}$ now implies that
$$
\psi J=-J\psi.
$$
From this, we conclude that $J^\psi :=\psi J\psi^{-1}=-J$, which is a contradiction as we had assumed $J^\psi$ to be integrable.  
\end{proof}

Recall that the standard almost complex structure $J$ on $S^6$ is induced from a choice of  $7$-dimensional cross product \cite{Calabi1958}
$$
\times : \mathbb{R}^7\times \mathbb{R}^7\rightarrow \mathbb{R}^7,\hspace*{0.1in} (u,v)\mapsto u\times v
$$
by defining $J_p: T_pS^6\rightarrow T_pS^6$ by
$$
J_p(v):=p\times v
$$
for all $p\in S^6$.  The 14-dimensional exceptional Lie group $G_2$ can be realized as the set of all invertible linear maps $F:\mathbb{R}^7\rightarrow \mathbb{R}^7$ which preserve the aforementioned cross product:
\begin{equation}
\label{eqG2Def}
G_2:=\{F\in \mathrm{GL}(\mathbb{R}^7)~|~F(u\times v)=(Fu)\times (Fv)\}.  
\end{equation}
From this definition, one can show that $G_2\subset \mathrm{SO}(7)$.  We refer the reader to \cite{Gray1969,Bryant1987,Karigiannis2020} for details.  Since $F\in G_2$ is a linear map, we have that 
$$
dF_p=F|_{T_pS^6}.
$$  
From the definition of $G_2$ and $J$, it follows that 
\begin{equation}
\label{eqG2}
dF_p\circ J_p=J_{F(p)}\circ dF_p,\hspace*{0.1in}\forall p\in S^6,~F\in G_2.
\end{equation}
Combining (\ref{eqG2}) with Theorem \ref{thmS6Twist}, one has the following extension:
\begin{corollary}
\label{corG2S6Twist}
Let $(g,J,\omega)$ be the standard nearly K\"{a}hler structure on $S^6$ and let $\times$ be the 7-dimensional cross product on $\mathbb{R}^7$ from which $J$ is defined.  Let $G_2\subset SO(7)$ be defined via (\ref{eqG2Def}).  Then $J^{\varphi}$ is nonintegrable for all $\varphi\in \mathrm{Aut}(TS^6)$ of the form  $\varphi:=dF\circ \psi \circ dF^{-1}$ where $\psi\in \mathrm{Aut}(TS^6)$ is a $g$-Codazzi map and $F\in G_2$.
\end{corollary}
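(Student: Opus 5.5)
The plan is to reduce Corollary~\ref{corG2S6Twist} to Theorem~\ref{thmS6Twist} by showing that $J^\varphi$ is the conjugate, under the diffeomorphism $F$, of $J^{\tilde\psi}$ for some $g$-Codazzi map $\tilde\psi$. The natural guess, using the $G_2$-equivariance (\ref{eqG2}), is
\[
\tilde\psi := dF^{-1}\circ \varphi\circ dF = \psi,
\]
so the real content is that conjugating by $dF$ commutes with twisting. Here $dF$ denotes the bundle map $T_pS^6\to T_{F(p)}S^6$, so $dF\circ\psi\circ dF^{-1}$ is indeed a bundle automorphism covering the identity.

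The first step is to compute $J^\varphi$ directly:
\[
J^\varphi=dF\circ\psi\circ dF^{-1}\circ J\circ dF\circ \psi^{-1}\circ dF^{-1}.
\]
By (\ref{eqG2}) we have $dF^{-1}\circ J\circ dF=J$, since $F^{-1}\in G_2$ as well. Therefore
\[
J^\varphi=dF\circ J^\psi\circ dF^{-1}=:I.
\]

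The second step uses the transformation law for the Nijenhuis tensor recorded in the introduction:
\[
dF^{-1}\bigl(N_I(X,Y)\bigr)=N_{J^\psi}\bigl(dF^{-1}X,dF^{-1}Y\bigr),
\]
which holds because $dF$ preserves Lie brackets. Hence $N_I=0$ if and only if $N_{J^\psi}=0$. Theorem~\ref{thmS6Twist} says $J^\psi$ is nonintegrable, since $\psi$ is $g$-Codazzi. It follows that $J^\varphi$ is nonintegrable.

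The only point needing care is the step where $J$ is conjugated by $dF$. We must make sure that (\ref{eqG2}) is applied pointwise with the correct base points: $dF^{-1}$ maps $T_{F(p)}$ to $T_p$, and $J$ must be evaluated at the matching point. Once this bookkeeping is done, no further computation is required. In particular, we do not need to check whether $\varphi$ is itself $g$-Codazzi. It is, since $F$ is an isometry, but the argument never uses this.
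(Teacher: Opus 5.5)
Your proposal is correct and matches the paper's proof: you rewrite $J^\varphi = dF\circ J^\psi\circ dF^{-1}$ via (\ref{eqG2}), transfer nonintegrability through the naturality of the Nijenhuis tensor under the diffeomorphism $F$, and invoke Theorem \ref{thmS6Twist}. Your side remark is also correct: since $F$ restricts to an isometry of the round metric, $\varphi$ is itself $g$-Codazzi, so the corollary follows directly from Theorem \ref{thmS6Twist} applied to $\varphi$.
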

\begin{proof}
Let $\varphi:=dF\circ \psi\circ dF^{-1}$ where $\psi\in \mathrm{Aut}(TS^6)$ is a $g$-Codazzi map and $F\in G_2$.  Then 
\begin{align*}
J^\varphi&:=\varphi\circ J\circ \varphi^{-1}\\
&=(dF\circ \psi \circ dF^{-1})\circ J\circ (dF\circ \psi^{-1}\circ dF^{-1})\\
&=dF \circ \psi \circ J\circ \psi^{-1} \circ dF^{-1}\\
&=dF \circ  J^\psi\circ dF^{-1},
\end{align*}
where the third equation follows from (\ref{eqG2}). The Nijenhuis tensor of $J^\varphi$ and $J^\psi$ are then related by
\begin{equation}
\label{eqJvarphiJpsi}
dF^{-1}(N_{J^\varphi}(X,Y))=N_{J^\psi}(dF^{-1}(X),dF^{-1}(Y)),\hspace*{0.1in}\forall~X,Y\in \mathfrak{X}(S^6).
\end{equation}
Since $J^\psi$ is nonintegrable by Theorem \ref{thmS6Twist}, (\ref{eqJvarphiJpsi}) implies that $J^\varphi$ is also nonintegrable.
\end{proof}

\noindent Theorem \ref{thmS6Twist} fully resolves Problem \ref{conjPsiTwist} for the special case of $g$-Codazzi maps.  For reasons of comparison, let us now turn our attention to a result of Bor and Hern\'{a}ndez-Lamoneda \cite{BorHernandezLamoneda1999}.  Let $h$ be any Riemannian metric on $S^6$.  In Corollary 3 of \cite{BorHernandezLamoneda1999}, the authors give sufficient conditions on $h$ so that any almost complex structure $I$ on $S^6$ which is compatible  with $h$ is necessarily nonintegrable.  These conditions are as follows:
\begin{itemize}
\item[(i)]$\mathcal{R}^h_p: \wedge^2 T^\ast_pS^6\rightarrow \wedge^2T^\ast_pS^6$ has positive eigenvalues for all $p\in S^6$.
\item[(ii)] $\lambda_{\mathrm{max}}/\lambda_{\mathrm{min}}<7/5$ where $\lambda_{\mathrm{max}}$ ($\lambda_{\mathrm{min}}$) is the largest (smallest) eigenvalue of $\mathcal{R}^h$ considered pointwise among all points of $S^6$.
\end{itemize}
While the result of  \cite{BorHernandezLamoneda1999} is quite nice, it has definite limitations when applied to Problem \ref{conjPsiTwist} for the special case of $g$-Codazzi maps.  Indeed, one can construct $g$-Codazzi maps where condition (ii) is no longer satisfied which, in turn, renders the result of  \cite{BorHernandezLamoneda1999} inapplicable.  We illustrate this with an  example.
\begin{example}
Let $f$ be a smooth function on $S^n$ for $n\ge 2$.  Let $\mathrm{Hess}_gf$ be the Hessian of $f$ with respect to the round metric $g$.  Let $\varphi: TS^n\rightarrow TS^n$ be the bundle map defined by 
$$
(\mathrm{Hess}_gf)(X,Y)=g(\varphi X,Y).
$$
Let $v\in T_pS^n$ be a unit vector and $\gamma_v^p(t)$ denote the geodesic satisfying $\gamma_v^p(0)=p$ and $\frac{d}{dt}|_{t=0}{\gamma}_v^p(0)=v$.  Explicitly,
$$
\gamma_v^p(t)=(\cos t)p+(\sin t)v.
$$
Let $E\subset TS^{n}$ denote the fiber bundle consisting of all tangent vectors of unit length, that is, $E$ is the sphere bundle of $S^n$.  Define $B: E\rightarrow \mathbb{R}$ by
$$
B(p,v):=\frac{d^2}{dt^2}\Big|_{t=0}f(\gamma_v^p(t)).
$$
One can show (cf Ch. 3 \cite{doCarmo1992}) that the minimum and maximum value that $B$ attains is precisely the minimum and maximum eigenvalues of $\varphi$ among all points of $S^n$.  

Now consider the Codazzi tensor $A_{f,c}$ on $S^6$ with $f=x_{1}x_{2}$ (where $x_i$ is the $i$-th natural coordinate function on  $\mathbb{R}^7$):
$$
A_{f,c}=\mathrm{Hess}_gf+(f+c)g.
$$ 
Define $F^c: TS^6\rightarrow TS^6$ by
$$
A_{f,c}(X,Y)=g(F^cX,Y).
$$
From this, we have
$$
F^c= \varphi+(f+c)\mbox{id}
$$
where $\varphi$ is again defined by
$$
(\mathrm{Hess}_gf)(X,Y)=g(\varphi X,Y).
$$
Note that $v\in T_pS^6$ is an eigenvector of $F_p^c: T_pS^6\rightarrow T_pS^6$ if and only if $v$ is an eigenvector of $\varphi_p: T_pS^6\rightarrow T_pS^6$.  Computing $B(p,v)$ for $f=x_1x_2$, one obtains
$$
B(p,v)=2(v_1v_2-p_1p_2),
$$
where $p=(p_1,\dots,p_7)$ and $v=(v_1,\dots, v_7)$.  The maximum and minimum values of $B(p,\cdot)$ taken over all $v\in T_pS^6$ such that $g(v,v)=1$  are found to be
$$
-3p_1p_2\pm \sqrt{(1-p_1^2)(1-p_2^2)}.
$$
The maximum and minimum eigenvalues of $F_p^c$ are then
$$
-2p_1p_2+c\pm \sqrt{(1-p_1^2)(1-p_2^2)}.
$$
Maximizing and minimizing the above expression subject to the condition $p_1^2+p_2^2\le 1$, one finds that the maximum and minimum eigenvalues of $F^c$ (among all points $p\in S^6$) is 
$$
c\pm \frac{3}{2}.
$$
Hence, $F^c$ is guaranteed to be invertible for $c>3/2$ and $c<-3/2$.  For $c>3/2$ ($c<-3/2$), $F_p^c$ has all positive (negative) eigenvalues for all $p\in S^6$.  With $c>3/2$ or $c<-3/2$, we obtain a $g$-Codazzi map $\psi^c$ by taking $\psi^c:=(F^c)^{-1}$ .  The maximum and minimum eigenvalues of $\psi^c$ are then
$$
\frac{1}{c-3/2},\hspace*{0.2in}\frac{1}{c+3/2}.
$$
Let $h^c:=g^{\psi_c}$.  By Proposition \ref{propCurvatureOperator}, the curvature operator 
$$
\mathcal{R}^{h^c}:\wedge^2 T^\ast S^6\rightarrow \wedge^2 T^\ast S^6
$$ 
is given by
$$
\mathcal{R}^{h^c}=\mathcal{R}^g\circ {\psi^c}^\ast ={\psi^c}^\ast.
$$
For $c>3/2$ and $c<-3/2$, the eigenvalues of $\mathcal{R}^{h^c}$ at all points of $S^6$ are positive.  For $c>3/2$, the maximum and minimum eigenvalues of $\mathcal{R}^{h^c}$ are 
$$
\lambda_{\mathrm{max}}=\frac{1}{(c-3/2)^2},\hspace*{0.1in}\lambda_{\mathrm{min}}=\frac{1}{(c+3/2)^2}.
$$
The condition $\lambda_{max}/\lambda_{min}<7/5$ (subject to the condition that $c>3/2$) is satisfied if
$$
c>9+\frac{3\sqrt{35}}{2}\approx 17.9.
$$
For $c<-3/2$, the maximum and minimum eigenvalues of $\mathcal{R}^{h^c}$ are respectively 
$$
\lambda_{\mathrm{max}}=\frac{1}{(c+3/2)^2},\hspace*{0.1in}\lambda_{\mathrm{min}}=\frac{1}{(c-3/2)^2}.
$$
The condition $\lambda_{max}/\lambda_{min}<7/5$ (subject to the condition that $c<-3/2$) is satisfied if
$$
c<-9-\frac{3\sqrt{35}}{2}\approx -17.9.
$$
Let $J$ be the standard almost complex structure on $S^6$ and let $I^c:=J^{\psi^c}$.   By Corollary 3 of \cite{BorHernandezLamoneda1999}, one concludes that $I^c$ is nonintegrable for $c>9+\frac{3\sqrt{35}}{2}\approx 17.9$ and $c<-9-\frac{3\sqrt{35}}{2}\approx -17.9$.  However, by Theorem \ref{thmS6Twist}, $I^c$ is nonintegrable for all $c$ such that $\psi^c:=(F^c)^{-1}$ exists.  In particular, Theorem \ref{thmS6Twist} and the above calculation implies that $I^c$ is actually nonintegrable for  $c>3/2$ and $c<-3/2$.
\end{example}
Theorem \ref{thmS6Twist} can be regarded as a reasonable first step in laying the foundation for future work on Problem \ref{conjPsiTwist}.  The form of this future work will entail solving Problem \ref{conjPsiTwist} for more general classes of automorphisms of $TS^6$, which may, in turn, yield new insight into Problem \ref{conjPsiTwist} and may (hopefully) lead to a complete solution to the problem.

\end{document}